\journal{}
\newcommand{\eps}{\varepsilon}
\newcommand{\set}[1]{\left\{#1\right\}}
\newcommand{\p}{\partial}
\newcommand{\mc}{\mathbf{c}}
\newcommand{\mn}{\mathbf{n}}
\newcommand{\mt}{\mathbf{t}}
\newcommand{\mx}{\mathbf{x}}
\newcommand{\my}{\mathbf{y}}
\newcommand{\mz}{\mathbf{z}}
\newcommand{\mH}{\mathbf{H}}
\newcommand{\mT}{\mathbf{T}}
\newcommand{\mU}{\mathbf{U}}
\newcommand{\mV}{\mathbf{V}}
\newcommand{\mW}{\mathbf{W}}
\newcommand{\W}{\mathbb{W}}
\newcommand{\vt}{\boldsymbol{\theta}}
\newcommand{\vv}{\boldsymbol{\vartheta}}
\newcommand{\vx}{\boldsymbol{\xi}}
\newtheorem{thm}{Theorem}[section]
\newtheorem{cor}[thm]{Corollary}
\newtheorem{lem}[thm]{Lemma}
\newtheorem{rem}[thm]{Remark}
\begin{document}

\begin{frontmatter}



\title{Subspace migration for imaging of thin, curve-like electromagnetic inhomogeneities without shape information}


\author{Won-Kwang Park}
\ead{parkwk@kookmin.ac.kr}
\address{Department of Mathematics, Kookmin University, Seoul, 136-702, Korea.}

\begin{abstract}
It is well-known that subspace migration is stable and effective non-iterative imaging technique in inverse scattering problem. But, for a proper application, geometric features of unknown targets must be considered beforehand. Without this consideration, one cannot retrieve good results via subspace migration. In this paper, we identify the mathematical structure of single- and multi-frequency subspace migration without any geometric consideration of unknown targets and explore its certain properties. This is based on the fact that elements of so-called Multi-Static Response (MSR) matrix can be represented as an asymptotic expansion formula. Furthermore, based on the examined structure, we improve subspace migration and consider the multi-frequency subspace migration. Various results of numerical simulation with noisy data support our investigation.
\end{abstract}

\begin{keyword}
Subspace migration \sep thin electromagnetic inhomogeneities \sep geometric consideration \sep Multi-Static Response (MSR) matrix \sep numerical simulation



\end{keyword}

\end{frontmatter}





\section{Preliminaries}\label{Sec1}
The purpose of this paper is to analyze subspace migration for imaging of thin crack-like electromagnetic inhomogeneity located in the two-dimensional homogeneous space $\mathbb{R}^2$. In order to properly start the analysis, let us introduce the mathematical model and subspace migration imaging algorithm before a brief recapitulation of known results and the presentation of the structure of the paper.

Let $\Gamma$ be a thin, curve-like homogeneous imhomogeneity within a homogeneous space $\mathbb{R}^2$. Throughout this paper, we assume that $\Gamma$ is localized in the neighborhood of a finitely long, smooth curve $\sigma$ such that
\begin{equation}\label{TI}
\Gamma=\set{\mx+\eta\mn(\mx):\mx\in\sigma,-h\leq\eta\leq h},
\end{equation}
where $\mn(\mx)$ is the unit normal to $\sigma$ at $\mx$, and $h$ is a strictly positive constant which specifies the thickness of the inhomogeneity (small with respect to the wavelength), refer to Figure \ref{ThinInclusion}. Throughout this paper, we denote $\mt(\mx)$ be the unit tangent vector at $\mx\in\sigma$.

\begin{figure}[h]
\begin{center}
\includegraphics[width=0.3\textwidth,keepaspectratio=true,angle=0]{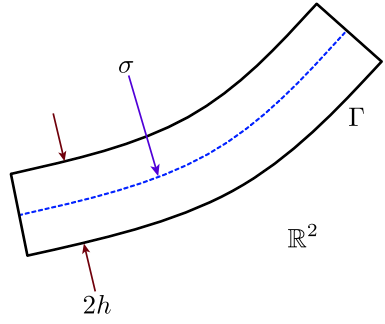}
\caption{\label{ThinInclusion}Sketch of the thin inhomogeneity $\Gamma$ in two-dimensional space $\mathbb{R}^2$.}
\end{center}
\end{figure}

In this paper, we assume that every material is characterized by its dielectric permittivity and magnetic permeability at a given
frequency. Let $0<\eps_0<+\infty$ and $0<\mu_0<+\infty$ denote the permittivity and permeability of the embedding space $\mathbb{R}^2$, and
$0<\eps_\star<+\infty$ and $0<\mu_\star<+\infty$ the ones of the inhomogeneity $\Gamma$. Then, we can define the following piecewise constant dielectric
permittivity
\begin{equation}\label{EP}
\eps(\mx)=\left\{\begin{array}{ccl}
\eps_0&\mbox{for}&\mx\in\mathbb{R}^2\backslash\overline{\Gamma}\\
\eps_\star&\mbox{for}&\mx\in\Gamma
\end{array}\right.
\end{equation}
and magnetic permeability
\begin{equation}\label{MP}
\mu(\mx)=\left\{\begin{array}{ccl}
\mu_0&\mbox{for}&\mx\in\mathbb{R}^2\backslash\overline{\Gamma}\\
\mu_\star&\mbox{for}&\mx\in\Gamma,
\end{array}\right.
\end{equation}
respectively. Note that if there is no inhomogeneity, i.e., in the homogeneous space, $\mu(\mx)$ and $\eps(\mx)$ are equal to $\mu_0$ and
$\eps_0$ respectively. In this paper, we set $\eps_\star>\eps_0=1$ and $\mu_\star>\mu_0=1$ for convenience but exact values of $\eps_\star$ and $\mu_\star$ are assumed unknown.

At strictly positive angular frequency $\omega$ (wavenumber $k_0=\omega\sqrt{\eps_0\mu_0}=\omega$), let $u_{\mathrm{tot}}(\mx;\omega)$ be the time-harmonic total field which satisfies the Helmholtz equation
\begin{equation}\label{HE1}
\nabla\cdot\left(\frac{1}{\mu(\mx)}\nabla u_{\mathrm{ tot}}(\mx;\omega)\right)+\omega^2\eps(\mx)u_{\mathrm{
tot}}(\mx;\omega)=0\quad\mbox{in}\quad\mathbb{R}^2
\end{equation}
with transmission condition on the boundary $\p\Gamma$. Similarly, the incident field $u_{\mathrm{inc}}(\mx;\omega)$ satisfies the homogeneous Helmholtz equation
\[\triangle u_{\mathrm{inc}}(\mx;\omega)+\omega^2u_{\mathrm{inc}}(\mx;\omega)=0\quad\mbox{in}\quad\mathbb{R}^2.\]
Throughout this paper, we consider the illumination of plane waves
\[u_{\mathrm{inc}}(\mx;\omega)=e^{i\omega\vt\cdot\mx}\quad\mbox{for}\quad\mx\in\mathbb{R}^2,\]
where $\vt$ is a two-dimensional vector, which characterizes the direction, on the unit circle $\mathbb{S}^1$ in $\mathbb{R}^2$.
As is usual, the total field $u_{\mathrm{tot}}(\mx;\omega)$ divides itself into the incident field $u_{\mathrm{inc}}(\mx;\omega)$ and the corresponding scattered field $u_{\mathrm{scat}}(\mx;\omega)$ such that $u_{\mathrm{ tot}}(\mx;\omega)=u_{\mathrm{inc}}(\mx;\omega)+u_{\mathrm{scat}}(\mx;\omega)$. Notice that this unknown
scattered field $u_{\mathrm{scat}}(\mx;\omega)$ satisfies the Sommerfeld radiation condition
\[\lim_{|\mx|\to\infty}\sqrt{|\mx|}\left(\frac{\p u_{\mathrm{ scat}}(\mx;\omega)}{\p|\mx|}-i\omega
u_{\mathrm{scat}}(\mx;\omega)\right)=0\]
uniformly in all directions $\hat{\mx}=\mx/|\mx|$.

The far-field pattern is defined as a function $u_{\infty}(\vv,\vt;\omega)$ which satisfies \[u_{\infty}(\vv,\vt;\omega)=\frac{e^{i\omega|\my|}}{\sqrt{|\my|}}u_{\mathrm{scat}}(\mx;\omega)+o\left(\frac{1}{\sqrt{|\my|}}\right)\]
as $|\my|\longrightarrow\infty$ uniformly on $\vv=\my/|\my|\in\mathbb{S}^1$ and $\vt\in\mathbb{S}^1$. Then, based on {BF},
$u_{\infty}(\vv,\vt;\omega)$ can be written as an asymptotic expansion formula.

\begin{lem}[See \cite{BF}]
  For $\vv,\vt\in\mathbb{S}^1$ and $\mx\in\mathbb{R}^2\backslash\overline{\Gamma}$, the far-field pattern $u_{\infty}(\mx,\vt;\omega)$
  can be represented as
  \[u_{\infty}(\vv,\vt;\omega)=h\frac{\omega^2(1+i)}{4\sqrt{\omega\pi}}\int_{\sigma}\bigg((\eps_\star-1)-2\vv\cdot\mathbb{M}(\my)\cdot\vt\bigg)e^{i\omega(\vt-\vv)\cdot\my}d\sigma(\my),\]
  where $o(h)$ is uniform in $\my\in\sigma$, $\vv,\vt\in\mathbb{S}^1$, and $\mathbb{M}(\my)$ is a $2\times2$ symmetric matrix defined as
  follows: let $\mt(\my)$ and $\mn(\my)$ denote unit tangent and normal vectors to $\sigma$ at $\my$, respectively. Then
    \begin{itemize}
      \item $\mathbb{M}(\my)$ has eigenvectors $\mt(\my)$ and $\mn(\my)$.
      \item The eigenvalue corresponding to $\mt(\my)$ is $2\left(\frac{1}{\mu_\star}-\frac{1}{\mu_0}\right)=2\left(\frac{1}{\mu_\star}-1\right)$.
      \item The eigenvalue corresponding to $\mn(\my)$ is $2\left(\frac{1}{\mu_0}-\frac{\mu_\star}{\mu_0^2}\right)=2(1-\mu_\star)$.
    \end{itemize}
\end{lem}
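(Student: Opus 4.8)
The plan is to derive the formula from a volume integral representation of the scattered field and then take its far-field asymptotics. Let $\Phi(\mx,\my)=\frac{i}{4}H_0^{(1)}(\omega|\mx-\my|)$ be the outgoing fundamental solution of $\triangle+\omega^2$ in $\mathbb{R}^2$. Subtracting the background equation from \eqref{HE1} and integrating by parts against $\Phi$, I would obtain a representation in which the permittivity contrast $\eps_\star-1$ enters through a zeroth-order volume potential and the permeability contrast $\mu_\star^{-1}-1$ enters through a term containing $\nabla_\my\Phi(\mx,\my)\cdot\nabla u_{\mathrm{tot}}(\my;\omega)$, both integrated over $\Gamma$. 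Passing to the far field via $H_0^{(1)}(\omega|\mx-\my|)\sim\sqrt{2/(\pi\omega|\mx|)}\,e^{i(\omega|\mx|-\pi/4)}e^{-i\omega\vv\cdot\my}$ with $\vv=\mx/|\mx|$ then replaces $\Phi$ by its leading amplitude and $\nabla_\my\Phi$ by a factor $-i\omega\vv$. Collecting the constants $\tfrac{i}{4}\sqrt{2/(\pi\omega)}\,e^{-i\pi/4}=\tfrac{1+i}{4\sqrt{\pi\omega}}$ produces the stated prefactor, while the product of the gradient factor $-i\omega\vv$ with the interior field gradient ($\propto i\omega\vt$) supplies the common $\omega^2$ together with the bilinear form $\vv\cdot(\,\cdot\,)\cdot\vt$.

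Next I would carry out the thin-inclusion reduction. Parametrizing $\Gamma$ as in \eqref{TI} by $\my=\mx+\eta\mn(\mx)$ with $\mx\in\sigma$ and $\eta\in[-h,h]$, the area element factorizes as $(1+O(\eta))\,d\eta\,d\sigma(\mx)$, so that to leading order in $h$ the integral over $\Gamma$ collapses to a one-dimensional integral over $\sigma$ with an overall thickness factor proportional to $h$; because the phase $e^{i\omega(\vt-\vv)\cdot\my}$ varies by $O(h)$ across the strip, it may be frozen at $\eta=0$, which produces the factor $e^{i\omega(\vt-\vv)\cdot\my}$ with $\my\in\sigma$. This already yields the scalar permittivity contribution $(\eps_\star-1)$. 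For the permeability term I would decompose the interior gradient into its tangential and normal parts, $\nabla u_{\mathrm{tot}}=(\mt\cdot\nabla u_{\mathrm{tot}})\mt+(\mn\cdot\nabla u_{\mathrm{tot}})\mn$, since $\mathbb{M}(\my)$ is diagonal precisely in this frame; matching each component against $\vv$ and $\vt$ reproduces the bilinear expression $2\vv\cdot\mathbb{M}(\my)\cdot\vt$.

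The \emph{main obstacle} is the correct interior-field approximation that produces the eigenvalues of $\mathbb{M}(\my)$, and this is where the transmission conditions must be used carefully. Continuity of $u_{\mathrm{tot}}$ along $\p\Gamma$ forces the tangential derivative to pass essentially unchanged through the interface, whereas continuity of $\mu^{-1}\p_{\mn}u_{\mathrm{tot}}$ forces the interior normal derivative to be rescaled by $\mu_\star/\mu_0$ relative to the exterior one. Consequently the effective coefficient multiplying the tangential component is $\mu_\star^{-1}-1$, while the normal component picks up the extra factor $\mu_\star$, giving $(\mu_\star^{-1}-1)\mu_\star=1-\mu_\star$; these are exactly the (doubled) eigenvalues $2(\mu_\star^{-1}-1)$ along $\mt(\my)$ and $2(1-\mu_\star)$ along $\mn(\my)$ asserted in the statement. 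Making this heuristic rigorous — justifying that the interior field is, to leading order in $h$, governed by the constant-coefficient transmission problem across an infinite strip, and controlling the $o(h)$ remainder uniformly in $\my\in\sigma$ and $\vv,\vt\in\mathbb{S}^1$ — is the delicate part; I would handle it by rescaling the normal variable $\eta=h\tilde\eta$, solving the limiting one-dimensional cell problem, and estimating the corrector as in the cited analysis of \cite{BF}.
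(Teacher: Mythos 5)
You should note at the outset that the paper itself contains no proof of this lemma: it is stated as a quotation of the cited reference \cite{BF}, so the only possible comparison is with the argument of that reference, which your sketch essentially reconstructs. Your route is the standard and correct one: the Lippmann--Schwinger volume representation, in which the permittivity contrast enters through a zeroth-order potential $\omega^2(\eps_\star-1)u_{\mathrm{tot}}\Phi$ and the permeability contrast through a gradient term $(\mu_\star^{-1}-1)\nabla u_{\mathrm{tot}}\cdot\nabla_\my\Phi$; the far-field asymptotics of $\tfrac{i}{4}H_0^{(1)}$, whose constant $\tfrac{i}{4}\sqrt{2/(\pi\omega)}\,e^{-i\pi/4}=(1+i)/(4\sqrt{\pi\omega})$ is exactly the stated prefactor and which replaces $\nabla_\my\Phi$ by $-i\omega\vv$; the collapse of the strip integral onto $\sigma$ with the phase frozen at $\eta=0$; and, crucially, the transmission conditions, which leave the tangential interior derivative equal to the background one while rescaling the normal derivative by $\mu_\star/\mu_0$, producing precisely the two distinct eigenvalues $(\mu_\star^{-1}-1)$ along $\mt(\my)$ and $(\mu_\star^{-1}-1)\mu_\star=1-\mu_\star$ along $\mn(\my)$. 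That transmission-condition step is the real content of the lemma, and you have it right.

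Two loose ends, neither fatal. First, the factor of two you flagged but did not resolve (the ``doubled'' eigenvalues) is just the thickness: $\Gamma$ has width $2h$ since $\eta\in[-h,h]$, so the collapse yields $2h\int_\sigma(\cdots)\,d\sigma(\my)$ rather than $h\int_\sigma(\cdots)\,d\sigma(\my)$, and that $2$ is what gets absorbed into $\mathbb{M}$. Second, do not try to match the remaining constants of the statement exactly: as printed, the lemma (with an additional factor $2$ in $-2\vv\cdot\mathbb{M}(\my)\cdot\vt$ on top of eigenvalues that already carry a factor $2$) is not consistent with the paper's own expansion (\ref{ElementofMSR}), where the $\mu$-terms enter with coefficients $(\mu_\star^{-1}-1)$ and $(1-\mu_\star)$ relative to $(\eps_\star-1)$, i.e.\ with the $1\!:\!1$ ratio that your derivation (and the original reference) produces. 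Finally, you are right that the uniform $o(h)$ control of the remainder is the genuinely technical part; the rescaled cell-problem/corrector argument you invoke is exactly the mechanism of \cite{BF}, so a complete proof would consist of making that estimate rigorous rather than of supplying any new idea.
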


Now, we introduce subspace migration for imaging of thin inhomogeneity $\Gamma$. Detailed description can be found in \cite{AGKPS,P-SUB3}. Let $\mathbb{K}(\omega)\in\mathbb{C}^{N\times N}$ be the Multi-Static Response (MSR) matrix whose elements are the collected far-field at observation number $j$ for the incident number $l$ such that
\[\mathbb{K}(\omega)=[K_{jl}(\omega)]_{j,l=1}^{N}=\left[
   \begin{array}{cccc}
     u_\infty(\vv_1,\vt_1) & u_\infty(\vv_1,\vt_2) & \cdots & u_\infty(\vv_1,\vt_N) \\
     u_\infty(\vv_2,\vt_1) & u_\infty(\vv_2,\vt_2) & \cdots & u_\infty(\vv_2,\vt_N) \\
     \vdots & \vdots & \ddots & \vdots \\
     u_\infty(\vv_N,\vt_1) & u_\infty(\vv_N,\vt_2) & \cdots & u_\infty(\vv_N,\vt_N)
   \end{array}
 \right].\]
In this paper, we assume that $\vv_j=-\vt_j$, i.e., we have the same incident and observation directions configuration. It is worth emphasizing that for a given frequency $\omega=2\pi/\lambda$, based on the resolution limit, any detail less than one-half of the wavelength cannot be retrieved. Hence, if we divide thin inhomogeneity $\Gamma$ into $M$ different segments of size of order $\lambda/2$, only one point, say, $\mx_m$, $m=1,2,\cdots,M$, at each segment will affect the imaging (see \cite{ABC,PL3}).
If $3M<N$, the elements of MSR matrix can be represented as follows:
\begin{align}
\begin{aligned}\label{ElementofMSR}
  u_{\infty}(\vv_j,\vt_l;\omega)=h\frac{\omega^2(1+i)}{4\sqrt{\omega\pi}}&\int_{\sigma}\bigg((\eps_\star-1)-2\left(\vv\cdot\mathbb{M}(\my)\cdot\vt\right)\bigg)e^{i\omega(\vt-\vv)\cdot\my}d\sigma(\my)\\
  \approx h\frac{\omega^2(1+i)}{4\sqrt{\omega\pi}}&\frac{|\sigma|}{M}\sum_{m=1}^{M}\bigg[(\eps_\star-1)+\left(\frac{1}{\mu_\star}-1\right)\vt_j\cdot\mt(\my_m)\vt_l\cdot\mt(\my_m)\\
  &+\left(1-\mu_\star\right)\vt_j\cdot\mn(\my_m)\vt_l\cdot\mn(\my_m)\bigg]e^{i\omega(\vt_j+\vt_l)\cdot\my_m},
\end{aligned}
\end{align}
where $|\sigma|$ denotes the length of $\sigma$.

Based on the representation (\ref{ElementofMSR}), $\mathbb{K}(\omega)$ can be decomposed as follows:
\begin{equation}\label{SymmetryMSR}
  \mathbb{K}(\omega)=\mathbb{H}(\omega)\mathbb{B}(\omega)\overline{\mathbb{H}}(\omega),
\end{equation}
where $\mathbb{B}(\omega)\in\mathbb{R}^{3M\times3M}$ is a block diagonal matrix with components
\[\mathbb{B}(\omega)=\frac{\omega^2|\sigma|}{M}
\left[
  \begin{array}{cc}
    \eps_\star-\eps_0 & \mathbb{O}_{1\times2} \\
    \mathbb{O}_{2\times1} & \mathbb{M}(\mx_m)\\
  \end{array}
\right],\]
and $\mathbb{H}(\omega)\in\mathbb{C}^{N\times3M}$ is written as
\[\mathbb{H}(\omega)=\bigg[\mathbb{H}_1(\omega),\mathbb{H}_2(\omega),\cdots,\mathbb{H}_M(\omega)\bigg].\]
Here, $\mathbb{O}_{p\times q}$ denotes the $p\times q$ zero matrix and vectors and $\mH_m(\omega)$ is represented
\begin{align}
\begin{aligned}\label{Vec1}
  \mathbb{H}_m(\omega)&=\bigg[\mH_m^{(1)}(\omega),\mH_m^{(2)}(\omega),\mH_m^{(3)}(\omega)\bigg]\\
  &=\left[
        \begin{array}{ccc}
          e^{i\omega\vt_1\cdot\mx_m}, & \vt_1\cdot\mt(\mx_m)e^{i\omega\vt_1\cdot\mx_m}, & \vt_1\cdot\mn(\mx_m)e^{i\omega\vt_1\cdot\mx_m} \\
          e^{i\omega\vt_2\cdot\mx_m}, & \vt_2\cdot\mt(\mx_m)e^{i\omega\vt_2\cdot\mx_m}, & \vt_2\cdot\mn(\mx_m)e^{i\omega\vt_2\cdot\mx_m} \\
          \vdots & \vdots & \vdots \\
          e^{i\omega\vt_N\cdot\mx_m}, & \vt_N\cdot\mt(\mx_m)e^{i\omega\vt_N\cdot\mx_m}, & \vt_N\cdot\mn(\mx_m)e^{i\omega\vt_N\cdot\mx_m} \\
        \end{array}
      \right].
\end{aligned}
\end{align}

Now, let us perform the Singular Value Decomposition (SVD) of $\mathbb{K}(\omega)$
\[\mathbb{K}(\omega)=\mathbb{U}(\omega)\mathbb{S}(\omega)\overline{\mathbb{V}}(\omega)^T=\sum_{m=1}^{N}\rho_m(\omega)\mU_m(\omega)\overline{\mV}_m(\omega)^T\approx\sum_{m=1}^{3M}\rho_m(\omega)\mU_m(\omega)\overline{\mV}_m(\omega)^T,\]
where $\rho_m(\omega)$, $m=1,2,\cdots,3M$, are nonzero singular values such that
\[\rho_1(\omega)\geq\rho_2(\omega)\geq\cdots\geq\rho_{3M}(\omega)>0\quad\mbox{and}\quad\rho_m(\omega)\approx0\quad\mbox{for}\quad
m\geq3M+1,\]
and $\mU_m(\omega)$ and $\mV_m(\omega)$ are left- and right-singular vectors of $\mathbb{K}(\omega)$, respectively. Based on the structure of (\ref{Vec1}), define a test vector $\mT(\mz;\omega)\in\mathbb{C}^{N\times1}$ as
\begin{equation}\label{VecT}
  \mT(\mz;\omega)=\bigg[\mc_1\cdot[1,\vt_1]e^{i\omega\vt_1\cdot\mz},\mc_2\cdot[1,\vt_2]e^{i\omega\vt_2\cdot\mz},\cdots,
  \mc_N\cdot[1,\vt_N]e^{i\omega\vt_N\cdot\mz}\bigg]^T
\end{equation}
and corresponding unit vector
\[\hat{\mT}(\mz;\omega):=\frac{\mT(\mz;\omega)}{|\mT(\mz;\omega)|},\]
where the selection of $\mc_n\in\mathbb{R}^3\backslash\set{\mathbf{0}}$, $n=1,2,\cdots,N$, is depending on the $\mt(\mx_m)$ and $\mn(\mx_m)$, i.e., shape of $\sigma$ (see
\cite{PL3} for a detailed discussion). Then, \textit{for a proper choice of} $\mc_n$, we can observe that
\[\hat{\mT}(\my_m;\omega)\approx\mU_m(\omega)\quad\mbox{and}\quad\hat{\mT}(\my_m;\omega)\approx\overline{\mV}_m(\omega).\]
Since, based on the orthonormal property of singular vectors, the first $3M$ columns of
$\mathbb{U}(\omega)$ and $\mathbb{V}(\omega)$ are orthonormal, it follows that
\begin{align}
\begin{aligned}\label{orthonormal}
  &\langle\hat{\mT}(\mz;\omega),\mU_m(\omega)\rangle\approx1,\quad\langle\hat{\mT}(\mz;\omega),\overline{\mV}_m(\omega)\rangle\approx1\quad\mbox{if}\quad\mz=\my_m\\
  &\langle\hat{\mT}(\mz;\omega),\mU_m(\omega)\rangle\approx0,\quad\langle\hat{\mT}(\mz;\omega),\overline{\mV}_m(\omega)\rangle\approx0\quad\mbox{if}\quad\mz\ne\my_m,
\end{aligned}
\end{align}
where $\langle\mathbf{a},\mathbf{b}\rangle=\overline{\mathbf{a}}\cdot\mathbf{b}$ for $\mathbf{a},\mathbf{b}\in\mathbb{C}$.

Hence, we can introduce subspace migration for imaging of thin inhomogeneity at a given frequency $\omega$ as
\begin{equation}\label{SingleSM}
  \W_{\mathrm{SM}}(\mz;\omega):=\left|\sum_{m=1}^{M}\langle\hat{\mT}(\mz;\omega),\mU_m(\omega)\rangle\langle\hat{\mT}(\mz;\omega),\overline{\mV}_m(\omega)\rangle\right|.
\end{equation}
Based on the properties (\ref{orthonormal}), map of $\W_{\mathrm{SF}}(\mz;\omega)$ should exhibit peaks of magnitude $1$ at $\mz=\mx_m\in\sigma$, and of small magnitude at $\mz\in\mathbb{R}^2\backslash\overline{\Gamma}$. This is the reason why thin inhomogeneity can be imaged via subspace migration.

Based on above, defining a vector $\mT(\mz;\omega)$ in (\ref{VecT}) plays a key role of imaging performance. For this, a proper selection of test vectors $\mc_n$ is very important. Note that based on (\ref{ElementofMSR}) and (\ref{Vec1}), $\mc_n$ must be a linear combination of tangential $\mt(\mx_m)$ and normal $\mn(\mx_m)$ vectors at $\mx_m\in\sigma$. However, we have no \textit{a priori} information of shape of thin inhomogeneity $\Gamma$, it is impossible to define an optimal vector $\mT(\mz;\omega)$. Due to this reason, in many works \cite{PL3,P-SUB1}, $\mc_n$ has chosen as a fixed vector and corresponding subspace migration imaging functional is considered.

In this paper, we consider the subspace migration imaging functional without any consideration of geometric property of thin electromagnetic inhomogeneities. Based on the structure of elements of MSR matrix (\ref{ElementofMSR}), we explore a relationship between subspace migration imaging functional and Bessel functions of integer order of the first kind. This relationship leads us certain properties of subspace migration and gives an idea of improvement of imaging performance. Furthermore, we extend such analysis to the multi-frequency subspace migration.

This paper is organized as follows. In Section \ref{Sec2}, we analyze single- and multi-frequency subspace migration imaging functional without any \textit{a priori} information of thin inhomogeneities by establishing a relationship with Bessel function of integer order of the first kind, discuss certain properties of subspace migration, and introduce an improved subspace migration. In Section \ref{Sec3}, several results of numerical experiments with noisy data are presented in order to support our analysis. Finally, a short conclusion is mentioned in Section \ref{Sec4}.

\section{Analysis of subspace migration without geometric consideration}\label{Sec2}
We now identify the structure of (\ref{SingleSM}) without consideration of shape of $\Gamma$. For this, since we have no information of $\mt(\my)$ and $\mn(\my)$ for $\my\in\sigma$, we consider the following test vector instead of (\ref{VecT})
\begin{equation}\label{TestVector}
  \mW(\mz;\omega)=\frac{1}{\sqrt{N}}\bigg[e^{i\omega\vt_1\cdot\mz},e^{i\omega\vt_2\cdot\mz},\cdots,e^{i\omega\vt_N\cdot\mz}\bigg]^T,
\end{equation}
and analyze corresponding single-frequency subspace migration functional
\begin{equation}\label{SubspaceMigration}
  \W_{\mathrm{SF}}(\mz;\omega):=\left|\sum_{m=1}^{M}\langle\mW(\mz;\omega),\mU_m(\omega)\rangle\langle\mW(\mz;\omega),\overline{\mV}_m(\omega)\rangle\right|.
\end{equation}
For starting analysis, we shall introduce two useful identities derived in \cite{P-SUB3}.

\begin{lem}\label{TheoremBessel}
  Let $\vx\in\mathbb{R}^2$ and $\vt_n\in\mathbb{S}^1$, $n=1,2,\cdots,N$. Then for sufficiently large $N$, the following relations hold:
  \begin{align*}
    &\frac{1}{N}\sum_{n=1}^{N}e^{i\omega\vt_n\cdot\mx}=\frac{1}{2\pi}\int_{\mathbb{S}^1}e^{i\omega\vt\cdot\mx}d\vt=J_0(\omega|\mx|)\\
    &\frac{1}{N}\sum_{n=1}^{N}\langle\vt_n,\vx\rangle e^{i\omega\vt_n\cdot\mx}=\frac{1}{2\pi}\int_{\mathbb{S}^1}\langle\vt,\vx\rangle e^{i\omega\vt\cdot\mx}d\vt=i\left\langle\frac{\mx}{|\mx|},\vx\right\rangle J_1(\omega|\mx|),
  \end{align*}
  where $J_n(\cdot)$ denotes the Bessel function of integer order $n$ of the first kind.
\end{lem}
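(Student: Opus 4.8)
\section*{Proof proposal}

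The plan is to treat the two asserted equalities in each line separately: first the passage from the discrete average to the integral over $\mathbb{S}^1$, and then the evaluation of that integral in terms of Bessel functions. For the discrete-to-integral step I would use that the directions $\vt_n$ are taken equidistributed on the unit circle, say $\vt_n=(\cos\frac{2\pi n}{N},\sin\frac{2\pi n}{N})$, so that $\frac{1}{N}\sum_{n=1}^N f(\vt_n)$ is precisely a Riemann sum of mesh $2\pi/N$ for $\frac{1}{2\pi}\int_{\mathbb{S}^1}f(\vt)\,d\vt$. Since both integrands $f(\vt)=e^{i\omega\vt\cdot\mx}$ and $f(\vt)=\langle\vt,\vx\rangle e^{i\omega\vt\cdot\mx}$ are smooth (hence Lipschitz) on the compact circle, the Riemann sums converge to the corresponding integrals as $N\to\infty$; this justifies both leftmost equalities for sufficiently large $N$. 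Note also that $\vt_n,\vx,\mx$ are real, so each $\langle\,\cdot\,,\cdot\,\rangle$ reduces to the ordinary Euclidean dot product and the conjugation in the convention $\langle\mathbf{a},\mathbf{b}\rangle=\overline{\mathbf{a}}\cdot\mathbf{b}$ is harmless.

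For the first integral I would parametrize $\vt=(\cos\phi,\sin\phi)$ and write $\mx=|\mx|(\cos\psi,\sin\psi)$, so that $\vt\cdot\mx=|\mx|\cos(\phi-\psi)$. After shifting the angular variable by $\psi$ and using $2\pi$-periodicity, the integral collapses to $\frac{1}{2\pi}\int_0^{2\pi}e^{i\omega|\mx|\cos\theta}\,d\theta$, which is exactly the classical integral representation of the Bessel function $J_0(\omega|\mx|)$ of order zero. This settles the first identity.

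The second identity is the one requiring a small idea rather than a direct table lookup. The plan is to observe that $\langle\vt,\vx\rangle e^{i\omega\vt\cdot\mx}=\frac{1}{i\omega}\,\vx\cdot\nabla_{\mx}e^{i\omega\vt\cdot\mx}$, since $\nabla_{\mx}e^{i\omega\vt\cdot\mx}=i\omega\,\vt\,e^{i\omega\vt\cdot\mx}$. Pulling the $\mx$-derivative outside the integral and invoking the first identity, the integral becomes $\frac{1}{i\omega}\,\vx\cdot\nabla_{\mx}J_0(\omega|\mx|)$. A chain-rule computation gives $\nabla_{\mx}J_0(\omega|\mx|)=\omega\,J_0'(\omega|\mx|)\,\frac{\mx}{|\mx|}$, and then the identity $J_0'=-J_1$ together with $\frac{1}{i}=-i$ yields exactly $i\,\langle\frac{\mx}{|\mx|},\vx\rangle\,J_1(\omega|\mx|)$. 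As an independent cross-check, one may instead split $\vx$ into its component along $\mx/|\mx|$ and its orthogonal complement: the orthogonal part integrates to zero by oddness in $\theta$, while the parallel part leaves $\frac{1}{2\pi}\int_0^{2\pi}\cos\theta\,e^{i\omega|\mx|\cos\theta}\,d\theta=i\,J_1(\omega|\mx|)$, evaluated again by differentiating the $J_0$ representation under the integral sign.

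I expect the only genuinely delicate point to be the legitimacy of differentiating under the integral (equivalently, of differentiating the Riemann-sum limit). However, since the integrand is entire in $\mx$ and the domain $\mathbb{S}^1$ is compact, the interchange of differentiation and integration is immediate and causes no trouble. Everything else is a routine change of variables combined with the standard integral representation of $J_0$ and the recurrence $J_0'=-J_1$.
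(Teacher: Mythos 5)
Your proposal is correct. Note, however, that the paper itself offers no proof of this lemma: it is imported verbatim from the cited reference \cite{P-SUB3}, so there is no internal argument to compare against, and your write-up actually fills a gap the paper leaves to the literature. On the merits: the discrete-to-integral step via equidistributed $\vt_n$ and Riemann sums is exactly the intended reading of ``for sufficiently large $N$''; the evaluation $\frac{1}{2\pi}\int_0^{2\pi}e^{i\omega|\mx|\cos\theta}\,d\theta=J_0(\omega|\mx|)$ is the classical representation; and your key step for the second identity, writing $\langle\vt,\vx\rangle e^{i\omega\vt\cdot\mx}=\frac{1}{i\omega}\,\vx\cdot\nabla_{\mx}e^{i\omega\vt\cdot\mx}$, differentiating under the integral, and using $J_0'=-J_1$ together with $-1/i=i$, lands precisely on $i\langle\frac{\mx}{|\mx|},\vx\rangle J_1(\omega|\mx|)$. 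You also correctly keep $\mx$ (in the phase) and $\vx$ (in the inner product) as distinct vectors, which the lemma's statement conflates notationally, and you rightly observe that the conjugation in the paper's convention $\langle\mathbf{a},\mathbf{b}\rangle=\overline{\mathbf{a}}\cdot\mathbf{b}$ is vacuous for real vectors. For comparison, the derivation in the cited literature typically proceeds instead via the Jacobi--Anger expansion $e^{iz\cos\theta}=\sum_{n=-\infty}^{\infty}i^nJ_n(z)e^{in\theta}$, integrating term by term so that only the $n=0$ (respectively $n=\pm1$) modes survive; that route generalizes more readily to higher-order moments $\langle\vt,\vx\rangle^k$, while your differentiation trick is shorter and self-contained for the two identities actually needed here. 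Either way, your argument is complete and sound.
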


\subsection{Analysis of single-frequency subspace migration}
Based on (\ref{TestVector}) and Lemma \ref{TheoremBessel}, we obtain the following main result.
\begin{thm}\label{TheoremSingleSM}
  For sufficiently large $N(>3M)$ and $\omega$, $\W_{\mathrm{SF}}(\mz;\omega)$ can be represented as follows:
  \begin{equation}\label{StructureSingleSM}
    \W_{\mathrm{SF}}(\mz;\omega)=\left|\sum_{m=1}^{M}\left\{J_0(\omega|\mx_m-\mz|)^2-2\left\langle\frac{\mx_m-\mz}{|\mx_m-\mz|},\mt(\mx_m)+\mn(\mx_m)\right\rangle^2J_1(\omega|\mx_m-\mz|)^2\right\}\right|.
  \end{equation}
  Furthermore, if $\omega\longrightarrow+\infty$ then
  \[\W_{\mathrm{SF}}(\mz;\omega)\approx\delta(\mx_m-\mz),\]
  where $\delta$ is the Dirac delta function.
\end{thm}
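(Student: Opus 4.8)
The plan is to reduce the functional (\ref{SubspaceMigration}) to an explicit projection of the test vector $\mW(\mz;\omega)$ onto the columns of $\mathbb{H}(\omega)$, and then to evaluate the resulting inner products with Lemma \ref{TheoremBessel}. First I would exploit the factorization (\ref{SymmetryMSR}): because $\mathbb{B}(\omega)$ is real and symmetric, $\mathbb{K}(\omega)$ is complex symmetric (as is transparent from the $j\leftrightarrow l$ symmetry of the bracket and phase in (\ref{ElementofMSR})), so its singular value decomposition is a Takagi factorization and one may take $\overline{\mV}_m(\omega)=\mU_m(\omega)$. Hence $\langle\mW,\mU_m\rangle\langle\mW,\overline{\mV}_m\rangle=\langle\mW,\mU_m\rangle^2$, and the first $3M$ left singular vectors form an orthonormal basis of the range of $\mathbb{H}(\omega)$, i.e.\ of the span of the columns $\mH_m^{(1)},\mH_m^{(2)},\mH_m^{(3)}$ appearing in (\ref{Vec1}). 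This turns the functional into the squared projection of $\mW$ onto those columns.

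Next I would compute the three elementary inner products between $\mW(\mz;\omega)$ and the columns attached to a target point $\mx_m$. Writing $\mathbf{u}_m=(\mx_m-\mz)/|\mx_m-\mz|$ and using $\langle\mathbf a,\mathbf b\rangle=\overline{\mathbf a}\cdot\mathbf b$, the definition (\ref{TestVector}) gives $\frac1N\langle\mW,\mH_m^{(1)}\rangle\sqrt N=\frac1N\sum_n e^{i\omega\vt_n\cdot(\mx_m-\mz)}$, which by the first identity of Lemma \ref{TheoremBessel} equals $J_0(\omega|\mx_m-\mz|)$; the second identity, applied with $\vx=\mt(\mx_m)$ and with $\vx=\mn(\mx_m)$, produces the tangential and normal contributions $i\langle\mathbf u_m,\mt(\mx_m)\rangle J_1(\omega|\mx_m-\mz|)$ and $i\langle\mathbf u_m,\mn(\mx_m)\rangle J_1(\omega|\mx_m-\mz|)$. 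The same two identities evaluated at $\mx_m-\mx_{m'}$ with $m\ne m'$ show that columns belonging to distinct points are asymptotically orthogonal once the segments are separated by at least $\lambda/2$, since $J_0$ and $J_1$ decay; this is exactly what lets the projection decouple into $M$ independent per-point contributions.

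Assembling these pieces is the third step. Substituting into $\sum_m\langle\mW,\mU_m\rangle^2$, the constant column contributes $J_0(\omega|\mx_m-\mz|)^2$, while grouping the tangential and normal columns into the single combination $\mH_m^{(2)}+\mH_m^{(3)}$ — the natural object when no shape information distinguishes $\mt$ from $\mn$ — and invoking linearity of the inner product yields the factor $i\langle\mathbf u_m,\mt(\mx_m)+\mn(\mx_m)\rangle J_1(\omega|\mx_m-\mz|)$; squaring this purely imaginary quantity and inserting the appropriate weight produces $-2\langle\mathbf u_m,\mt(\mx_m)+\mn(\mx_m)\rangle^2J_1(\omega|\mx_m-\mz|)^2$. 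Summing over $m$ and taking the modulus gives (\ref{StructureSingleSM}). For the asymptotic statement I would then use the large-argument decay $J_0(t),J_1(t)=O(t^{-1/2})$ together with $J_0(0)=1$ and $J_1(0)=0$: as $\omega\to+\infty$ every argument $\omega|\mx_m-\mz|$ with $\mz\ne\mx_m$ blows up so that the summand vanishes, whereas at $\mz=\mx_m$ the $m$-th summand equals $1$ and the others vanish, so $\W_{\mathrm{SF}}(\cdot;\omega)$ concentrates into sharp unit-height peaks at the $\mx_m$, which is the sense of $\W_{\mathrm{SF}}(\mz;\omega)\approx\delta(\mx_m-\mz)$.

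The main obstacle is the third step, and specifically the exact form of the $J_1^2$ term. If the tangential and normal columns were treated as separately orthonormalized singular directions, orthonormality of $\mt(\mx_m)$ and $\mn(\mx_m)$ would give $\langle\mathbf u_m,\mt(\mx_m)\rangle^2+\langle\mathbf u_m,\mn(\mx_m)\rangle^2=1$ and hence a direction-independent $J_0^2-2J_1^2$; recovering instead the direction-dependent $\langle\mathbf u_m,\mt(\mx_m)+\mn(\mx_m)\rangle^2$ of (\ref{StructureSingleSM}), with its cross term and coefficient $2$, forces the two columns to be handled jointly rather than individually. Reconciling this grouping with the orthonormality of the singular vectors, and controlling simultaneously all of the approximations entering (\ref{ElementofMSR})–(\ref{SubspaceMigration}) — the one-point-per-segment reduction, the neglect of the $\rho_m\approx0$ tail, the asymptotic column orthogonality, and the Bessel asymptotics of Lemma \ref{TheoremBessel} — so that the identity becomes exact in the joint limit $N\to\infty$, $\omega\to\infty$ with resolution $\lambda/2$, is the delicate quantitative part of the argument.
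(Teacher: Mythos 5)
Your overall route is the paper's: you approximate the singular vectors of $\mathbb{K}(\omega)$ by the normalized columns of $\mathbb{H}(\omega)$ (their mutual orthogonality is exactly the content of \ref{secA}), reduce $\W_{\mathrm{SF}}(\mz;\omega)$ to squared projections of $\mW(\mz;\omega)$ onto those columns, evaluate the projections with Lemma \ref{TheoremBessel}, and read off the $\delta$-type limit from the large-argument decay of $J_0$ and $J_1$; your three inner products are precisely the paper's (\ref{TermH}). The difference --- and the genuine gap --- is your assembly step, where you merge the tangential and normal columns into the single vector $\mH_m^{(2)}(\omega)+\mH_m^{(3)}(\omega)$ ``with the appropriate weight.'' That move is not legitimate. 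In the factorization (\ref{SymmetryMSR}) the middle matrix $\mathbb{B}(\omega)$ is already diagonal with respect to the columns (\ref{Vec1}), precisely because $\mt(\mx_m)$ and $\mn(\mx_m)$ are the eigenvectors of $\mathbb{M}(\mx_m)$, and the two eigenvalues $2(1/\mu_\star-1)$ and $2(1-\mu_\star)$ are distinct; consequently the (approximate) singular directions attached to $\mx_m$ are the individually normalized columns $\hat{\mH}_m^{(2)}(\omega)$ and $\hat{\mH}_m^{(3)}(\omega)$, never their sum. Moreover the vector $\frac{\sqrt2}{\sqrt{N}}\left(\mH_m^{(2)}(\omega)+\mH_m^{(3)}(\omega)\right)$ has norm $\sqrt2$, so projecting onto it with that weight contradicts the very orthonormality of singular vectors you invoked to set up the projection (it double counts), while normalizing it kills the factor $2$ you need; in either case you have also replaced a two-dimensional span by a single direction.

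The legitimate computation --- the one the paper actually performs --- keeps the three squared projections separate and adds them. Writing $\mathbf{u}_m=(\mx_m-\mz)/|\mx_m-\mz|$, this gives, per point,
\begin{align*}
&J_0(\omega|\mx_m-\mz|)^2-2\Big(\langle\mathbf{u}_m,\mt(\mx_m)\rangle^2+\langle\mathbf{u}_m,\mn(\mx_m)\rangle^2\Big)J_1(\omega|\mx_m-\mz|)^2\\
&\qquad=J_0(\omega|\mx_m-\mz|)^2-2J_1(\omega|\mx_m-\mz|)^2,
\end{align*}
since $\{\mt(\mx_m),\mn(\mx_m)\}$ is an orthonormal basis of $\mathbb{R}^2$ and $\mathbf{u}_m$ is a unit vector: no cross term $2\langle\mathbf{u}_m,\mt(\mx_m)\rangle\langle\mathbf{u}_m,\mn(\mx_m)\rangle$ can appear, because the orthogonality of $\hat{\mH}_m^{(2)}$ and $\hat{\mH}_m^{(3)}$ forbids mixed products $\langle\mW,\hat{\mH}_m^{(2)}\rangle\langle\mW,\hat{\mH}_m^{(3)}\rangle$ from entering the sum. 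So the diagnosis in your final paragraph is exactly right, but you resolved it in the wrong direction: the direction-dependent factor $\langle\mathbf{u}_m,\mt(\mx_m)+\mn(\mx_m)\rangle^2$ in (\ref{StructureSingleSM}) is not derivable from projections onto orthonormal singular directions, and the paper's own proof (summing the squares of (\ref{TermH})) in fact produces the cross-term-free expression above; the mismatch is an inconsistency between the paper's displayed statement and its own derivation (compare property \ref{P1}, where only the $\mn(\mx_m)$ contribution is retained), not something to be recovered by regrouping columns. A correct write-up should either prove the cross-term-free formula or state the factor as $\langle\mathbf{u}_m,\mt(\mx_m)\rangle^2+\langle\mathbf{u}_m,\mn(\mx_m)\rangle^2$; the remaining ingredients of your argument (the Takagi symmetry giving $\overline{\mV}_m\approx\mU_m$, the decoupling of distinct points by the $\lambda/2$ separation, the Bessel asymptotics for $\omega\to+\infty$) parallel the paper and are sound at the paper's level of rigor.
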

\begin{proof}
  Since $\mH_m^{(1)}$, $\mH_m^{(2)}$ and $\mH_m^{(3)}$ are orthogonal for $m=1,2,\cdots,M$ (see Section \ref{secA}), applying (\ref{TestVector}), and Lemma \ref{TheoremBessel} yields
  \[\W_{\mathrm{SF}}(\mz;\omega)=\left|\sum_{m=1}^{3M}\left\langle\mathbf{W}(\mz;\omega),\mU_m\right\rangle\left\langle\mathbf{W}(\mz;\omega),\overline{\mV}_m\right\rangle\right|\approx\left|\sum_{m=1}^{M}\sum_{s=1}^{3}\left\langle\mathbf{W}(\mz;\omega),\hat{\mH}_m^{(s)}(\omega)\right\rangle^2\right|,\]
  where
  \begin{align*}
    \hat{\mH}_m^{(1)}(\omega)&=\frac{1}{\sqrt{N}}\bigg[e^{i\omega\vt_1\cdot\mx_m},e^{i\omega\vt_2\cdot\mx_m},\cdots,e^{i\omega\vt_N\cdot\mx_m}\bigg]^T\\
    \hat{\mH}_m^{(2)}(\omega)&=\frac{\sqrt2}{\sqrt{N}}\bigg[\vt_1\cdot\mt(\mx_m)e^{i\omega\vt_1\cdot\mx_m},\vt_2\cdot\mt(\mx_m)e^{i\omega\vt_2\cdot\mx_m},\cdots,\vt_N\cdot\mt(\mx_m)e^{i\omega\vt_N\cdot\mx_m}\bigg]^T\\
    \hat{\mH}_m^{(3)}(\omega)&=\frac{\sqrt2}{\sqrt{N}}\bigg[\vt_1\cdot\mn(\mx_m)e^{i\omega\vt_1\cdot\mx_m},\vt_2\cdot\mn(\mx_m)e^{i\omega\vt_2\cdot\mx_m},\cdots,\vt_N\cdot\mn(\mx_m)e^{i\omega\vt_N\cdot\mx_m}\bigg]^T.
  \end{align*}
  Then, following an elementary calculus, we can calculate
  \begin{align}
  \begin{aligned}\label{TermH}
    \left\langle\mathbf{W}(\mz;\omega),\hat{\mH}_m^{(1)}(\omega)\right\rangle=&\frac{1}{N}\sum_{n=1}^{N}e^{i\omega\vt_n\cdot(\mx_m-\mz)}=J_0(\omega|\mx_m-\mz|)\\
    \left\langle\mathbf{W}(\mz;\omega),\hat{\mH}_m^{(2)}(\omega)\right\rangle=&\frac{\sqrt{2}}{N}\sum_{n=1}^{N}\langle\vt_n,\mt(\mx_m)\rangle e^{i\omega\vt_n\cdot(\mx_m-\mz)}=\sqrt{2}i\left\langle\frac{\mx_m-\mz}{|\mx_m-\mz|},\mt(\mx_m)\right\rangle J_1(\omega|\mx_m-\mz|)\\
    \left\langle\mathbf{W}(\mz;\omega),\hat{\mH}_m^{(3)}(\omega)\right\rangle=&\frac{\sqrt{2}}{N}\sum_{n=1}^{N}\langle\vt_n,\mn(\mx_m)\rangle e^{i\omega\vt_n\cdot(\mx_m-\mz)}=\sqrt{2}i\left\langle\frac{\mx_m-\mz}{|\mx_m-\mz|},\mn(\mx_m)\right\rangle J_1(\omega|\mx_m-\mz|)
  \end{aligned}
  \end{align}
  Hence, (\ref{StructureSingleSM}) can be derived by (\ref{TermH}).

  Now, assume that $\omega\longrightarrow+\infty$. Then, it is clear that $\W_{\mathrm{SF}}(\mz;\omega)\approx1$ when $\mz=\mx_m$. If $\mz\ne\mx_m$, then following asymptotic form of Bessel function holds for $\omega|\mx_m-\mz|\gg|n^2-0.25|$,
  \[J_n(\omega|\mx_m-\mz|)\approx\sqrt{\frac{2}{\omega\pi|\mx_m-\mz|}}\cos\left\{\omega|\mx_m-\mz|-\frac{n\pi}{2}-\frac{\pi}{4} +\mathcal{O}\left(\frac{1}{\omega|\mx_m-\mz|}\right)\right\}\longrightarrow0,\]
  where $n$ denotes a positive integer. Based on this asymptotic form, we can easily observe that $\W_{\mathrm{SF}}(\mz;\omega)\approx0$ when $\mz\ne\mx_m$. Hence,
  \[\W_{\mathrm{SF}}(\mz;\omega)\approx\delta(\mx_m-\mz).\]
\end{proof}

Based on the identified structure (\ref{StructureSingleSM}), we can examine certain properties of $\W_{\mathrm{SF}}(\mz;\omega)$:

\begin{enumerate}\renewcommand{\theenumi}{[P\arabic{enumi}]}
  \item\label{P1} Based on recent work \cite{PL3}, the dominant eigenvectors of matrix $\mathbb{M}(\mx)$ are $\mt(\mx)$ and $\mn(\mx)$ for $\mu_\star<\mu_0$ and $\mu_\star>\mu_0$, respectively. Generally (and based on our assumption), since $\mu_\star>\mu_0$, $\W_{\mathrm{SF}}(\mz;\omega)$ can be written as
      \[\W_{\mathrm{SF}}(\mz;\omega)\approx\left|\sum_{m=1}^{M}\left\{J_0(\omega|\mx_m-\mz|)^2-2\left\langle\frac{\mx_m-\mz}{|\mx_m-\mz|},\mn(\mx_m)\right\rangle^2J_1(\omega|\mx_m-\mz|)^2\right\}\right|.\]
  \item\label{P2} Since $J_0(0)=1$ and $J_n(0)=0$ for $n=1,2,\cdots,$ the terms
  \[J_0(\omega|\mx_m-\mz|)^2\quad\mbox{and}\quad2\left\langle\frac{\mx_m-\mz}{|\mx_m-\mz|},\mn(\mx_m)\right\rangle^2J_1(\omega|\mx_m-\mz|)^2\]
  contribute to and disturb the imaging performance, respectively.
  \item\label{P-SUB3} Based on the properties of $J_0(x)$ and $J_1(x)$, plots of $\W_{\mathrm{SF}}(\mz;\omega)$ will show peaks of magnitude $1$ at $\mz=\mx_m\in\Gamma$ and small one at $\mz\notin\Gamma$. Note that since $J_1(x)^2$ has its maximum value at two points, say $x_1$ and $x_2$, symmetric with respect to $x=0$, two curves with large (but less than $1$) magnitude and many artifacts with small magnitude will included in the map of $\W_{\mathrm{SF}}(\mz;\omega)$. This tells us that one can recognize the shape of thin inhomogeneities via the map of $\W_{\mathrm{SF}}(\mz;\omega)$ without geometric consideration.
\end{enumerate}

It is worth mentioning that based on \ref{P2}, $\W_{\mathrm{SF}}(\mz;\omega)$ has its maximum value at $\mz=\mx_m\in\Gamma$. Hence, we can immediately examine following result of unique determination.
\begin{cor}\label{CorollaryUnique1}
   Let the applied frequency $\omega$ be sufficiently high. If the total number $N$ of incident and observation directions is sufficiently large, then the shape of supporting curve $\sigma$ of thin inclusion $\Gamma$ can be obtained uniquely via the map of $\W_{\mathrm{SF}}(\mz;\omega)$.
\end{cor}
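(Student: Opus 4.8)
The plan is to read off the conclusion directly from the explicit structure of $\W_{\mathrm{SF}}(\mz;\omega)$ obtained in Theorem \ref{TheoremSingleSM}, and then to exploit the fact that raising $\omega$ simultaneously sharpens the peaks of this map and refines the sampling of $\sigma$ by the points $\mx_m$.

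First I would evaluate the structure formula \eqref{StructureSingleSM} at a grid point $\mz=\mx_m$ lying on $\sigma$. Since $|\mx_m-\mz|=0$ there, the identities $J_0(0)=1$ and $J_1(0)=0$ recalled in property \ref{P2} make the summand indexed by that $m$ equal to $1$, while for large $\omega$ the asymptotic decay of $J_0$ and $J_1$ established within the proof of Theorem \ref{TheoremSingleSM} renders the remaining summands, whose arguments $\omega|\mx_{m'}-\mz|$ are large, negligible. Hence $\W_{\mathrm{SF}}(\mx_m;\omega)\approx1$ attains the global maximum. Conversely, for $\mz\notin\{\mx_1,\dots,\mx_M\}$ every Bessel factor decays like $(\omega|\mx_m-\mz|)^{-1/2}$, so each squared contribution is $\mathcal{O}((\omega|\mx_m-\mz|)^{-1})$ and $\W_{\mathrm{SF}}(\mz;\omega)\approx0$. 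This is exactly the $\delta$-type behaviour stated in Theorem \ref{TheoremSingleSM}, and it identifies the support of the peaks of $\W_{\mathrm{SF}}$ with the sampled set $\{\mx_m:m=1,\dots,M\}\subset\sigma$.

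Next I would invoke the resolution principle underlying \eqref{ElementofMSR}: at frequency $\omega=2\pi/\lambda$ the curve $\sigma$ is partitioned into $M$ segments of length of order $\lambda/2=\pi/\omega$, one representative point $\mx_m$ per segment. As $\omega\longrightarrow+\infty$ this length tends to zero, so $M\longrightarrow\infty$ and the sampling points $\mx_m$ become dense in $\sigma$. Combined with the previous step, the set of global maxima of $\W_{\mathrm{SF}}(\cdot;\omega)$ therefore converges to $\sigma$ itself. Since the map $\W_{\mathrm{SF}}$ is computed directly from the measured MSR matrix through its singular value decomposition, distinct supporting curves yield distinct maps, which gives the asserted unique determination of the shape of $\sigma$.

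The main obstacle is controlling the interference among the $M$ summands near the true curve. Because $\W_{\mathrm{SF}}$ is the modulus of a sum over $m$, one must ensure that the leading $J_0(\omega|\mx_m-\mz|)^2$ contributions from neighbouring segments do not cancel and that the disturbing $J_1$ terms of property \ref{P2} stay strictly below the unit peak. A uniform lower bound on the spacing of the $\mx_m$ relative to $\lambda$, together with the $\mathcal{O}((\omega|\mx_m-\mz|)^{-1})$ off-peak decay, is needed to keep these spurious values uniformly small; establishing this separation so that every peak is resolved and the refinement argument holds simultaneously for all $m$ as $\omega\longrightarrow+\infty$ is the technical heart of the argument.
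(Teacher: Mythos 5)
Your proposal is correct and takes essentially the same route as the paper: the paper justifies Corollary \ref{CorollaryUnique1} in a single sentence, reading it off from Theorem \ref{TheoremSingleSM} and property \ref{P2} — namely that $\W_{\mathrm{SF}}(\mz;\omega)$ attains its maximal value at $\mz=\mx_m\in\Gamma$ and behaves like a Dirac delta as $\omega\longrightarrow+\infty$ — exactly as in your first two steps. Your added detail (the sampling points $\mx_m$ becoming dense in $\sigma$ as $\omega$ grows, and the honest flagging of inter-segment interference) goes beyond what the paper records, which treats the conclusion as immediate at the same heuristic level of rigor.
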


\subsection{Improvement of subspace migration part 1: filtering}
Now, we consider the method of improvement. Based on the structure (\ref{StructureSingleSM}), we can examine that good results can be obtained via the map of $\W_{\mathrm{SF}}(\mz;\omega)$ when $\omega\longrightarrow+\infty$. However, this is a theoretically ideal situation. Hence, for obtaining good results, an alternative method must be considered.

Based on the property discussed in \ref{P-SUB3}, one can obtain good results by eliminating two curves with large magnitude and correspondingly many artifacts with small magnitude. Hence, for designing a filtering strategy, let us consider the maximum value of the following term:
\[2\left\langle\frac{\mx_m-\mz}{|\mx_m-\mz|},\mn(\mx_m)\right\rangle^2J_1(\omega|\mx_m-\mz|)^2.\]
Since
\[\frac{d}{dx}J_1(x)^2=-2J_1(x)J_1'(x),\]
and the first zero of $J_1(x)$ and $J_1'(x)$ are approximately $x=3.8317$ and $x=1.8412$, respectively, $J_1(x)^2$ has its maximum at $x\approx1.8412$. Hence, based on the numerical computation,
\[\max_{\mz\in\mathbb{R}^2}J_1(\omega|\mx_m-\mz|)^2\approx(0.58186522\ldots)^2\approx0.338567,\]
and
\[2\left\langle\frac{\mx_m-\mz}{|\mx_m-\mz|},\mn(\mx_m)\right\rangle^2\leq2,\]
we can say that
\[2\left\langle\frac{\mx_m-\mz}{|\mx_m-\mz|},\mn(\mx_m)\right\rangle^2J_1(\omega|\mx_m-\mz|)^2\approx0.677134268\ldots<0.678.\]
This means that the magnitude of two curves will be less than $0.678$, refer to Figure \ref{Bessel12}. Hence, let us introduce a filtering function $\mathcal{F}_\mathrm{S}$ such that
\[\mathcal{F}_\mathrm{S}[x]=\left\{\begin{array}{ccc}
\medskip x & \mbox{if} & 0.678\leq x\leq1 \\
0 & \mbox{if} & 0\leq x<0.678.
\end{array}
\right.\]
Then, better imaging results of thin inhomogeneity can be obtained via the map of $\mathcal{F}_\mathrm{S}[\W_{\mathrm{SF}}(\mz;\omega)]$. Note that this method can be applied in the imaging of single inhomogeneity, refer to Figure \ref{GammaM1}.

\begin{figure}[h]
\begin{center}
\includegraphics[width=0.99\textwidth]{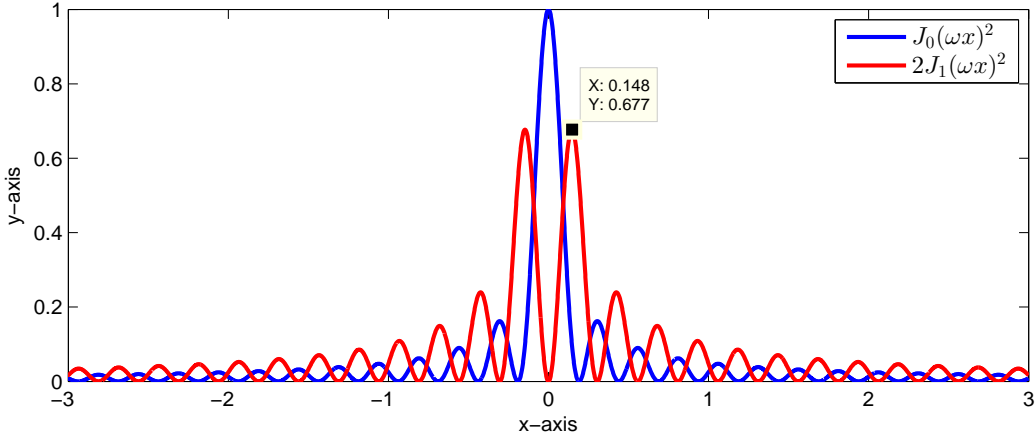}
\caption{\label{Bessel12}Graphs of $J_0(\omega x)^2$ and $2J_1(\omega x)^2$ for $\omega=2\pi/0.5$.}
\end{center}
\end{figure}

\begin{rem}\label{Remark1}
  Theoretically, the maximum value of $\W_{\mathrm{SF}}(\mz;\omega)$ is equal to $1$ but generally, maximum value is smaller than $1$ in the results of numerical simulations, refer to Figure \ref{GammaM1}. There are many reasons e.g., the algorithm is based on the asymptotic expansion formula, influence of random noise, and computational errors. So, instead of $\mathcal{F}_\mathrm{S}[\W_{\mathrm{SF}}(\mz;\omega)]$, we suggest the following normalized value
  \[\hat{\W}_{\mathrm{SF}}(\mz;\omega):=\frac{\W_{\mathrm{SF}}(\mz;\omega)}{\displaystyle\max_{\mz\in\mathbb{R}^2}|\W_{\mathrm{SF}}(\mz;\omega)|}\]
  and consider the filtered map $\mathcal{F}_\mathrm{S}[\hat{\W}_{\mathrm{SF}}(\mz;\omega)]$ for identifying shape of inclusions.
\end{rem}

\subsection{Improvement of subspace migration part 2: application of multi-frequency}
Based on recent works \cite{AGKPS,P-SUB3}, it has confirmed that applying multi-frequency offers better results than applying single frequency. But this fact holds with an optimal choice of $\mc_n$ in (\ref{VecT}). Now, we consider the multi-frequency subspace migration and examine that it improves single-frequency one. Let $\{\omega_f=2\pi/\lambda_f:f=1,2,\cdots,F\}$ be the set of $F-$different angular frequencies such that
\[\omega_1<\omega_2<\cdots<\omega_F,\quad\mbox{i.e.},\quad\lambda_1>\lambda_2>\cdots>\lambda_F,\]
and let $\mathbb{K}(\omega_f)$ be the collected MSR matrix at $\omega_f$. Then, by performing SVD of $\mathbb{K}(\omega_f)$ as
\[\mathbb{K}(\omega_f)=\mathbb{U}(\omega_f)\mathbb{S}(\omega_f)\overline{\mathbb{V}}(\omega_f)^T=\sum_{m=1}^{N}\rho_m(\omega_f)\mU_m(\omega_f)\overline{\mV}_m(\omega_f)^T\approx\sum_{m=1}^{3M_f}\rho_m(\omega_f)\mU_m(\omega_f)\overline{\mV}_m(\omega_f)^T,\]
we can introduce multi-frequency subspace migration:
\begin{equation}\label{MultiSM}
  \W_{\mathrm{MM}}(\mz;F):=\frac{1}{F}\left|\sum_{f=1}^{F}\W_{\mathrm{SM}}(\mz;\omega)\right|=\frac{1}{F}\left|\sum_{f=1}^{F}\sum_{m=1}^{M_f}\langle\hat{\mT}(\mz;\omega_f),\mU_m(\omega_f)\rangle\langle\hat{\mT}(\mz;\omega_f),\overline{\mV}_m(\omega_f)\rangle\right|,
\end{equation}
and corresponding multi-frequency subspace migration without geometric consideration:
\begin{equation}\label{SubspaceMigrationMulti}
  \W_{\mathrm{MF}}(\mz;F):=\frac{1}{F}\left|\sum_{f=1}^{F}\W_{\mathrm{SF}}(\mz;\omega_f)\right|=\frac{1}{F}\left|\sum_{f=1}^{F}\sum_{m=1}^{M_f}\langle\mW(\mz;\omega_f),\mU_m(\omega_f)\rangle\langle\mW(\mz;\omega_f),\overline{\mV}_m(\omega_f)\rangle\right|,
\end{equation}
where $\hat{\mT}(\mz;\omega_f)$ and $\mW(\mz;\omega_f)$ are defined in (\ref{VecT}) and (\ref{TestVector}), respectively.

From the results in several works \cite{AGKPS,P-SUB3,P-SUB1}, it has confirmed that (\ref{MultiSM}) is an improved imaging function of (\ref{SingleSM}). This is based on the Statistical Hypothesis Testing \cite{AGKPS}, several results of numerical experiments, and a relationship with Bessel functions \cite{P-SUB3,P-SUB1}. However, there is no theoretical results about the improvement of (\ref{SubspaceMigrationMulti}). In this section, we identify the reason by establishing a relationship with Bessel functions of integer order as follows.

\begin{thm}\label{TheoremMultiSM}
  For sufficiently large $N(>3M)$ and $\omega$, $\W_{\mathrm{MF}}(\mz;\omega)$ can be represented as follows:
  \begin{multline}\label{StructureMultiSM}
    \W_{\mathrm{MF}}(\mz;F)\approx\frac{1}{\omega_F-\omega_1}\left|\sum_{m=1}^{M}\bigg\{\Lambda(\mx_m-\mz;\omega_F)-\Lambda(\mx_m-\mz;\omega_1)\right.\\
    +\left.\left(1-2\left\langle\frac{\mx_m-\mz}{|\mx_m-\mz|},\mn(\mx_m)\right\rangle^2\right)\int_{\omega_1}^{\omega_F}J_1(\omega|\mx_m-\mz|)^2d\omega\bigg\}\right|,
  \end{multline}
  where
  \[\Lambda(x;\omega):=\omega\bigg(J_0(\omega|x|)^2+J_1(\omega|x|)^2\bigg).\]
\end{thm}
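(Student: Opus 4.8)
The plan is to reduce the frequency-averaged functional to a single integral over $\omega$ and then evaluate the resulting Bessel integrals in closed form. First I would start from the single-frequency representation (\ref{StructureSingleSM}) of Theorem \ref{TheoremSingleSM} and invoke property \ref{P1}: since we assume $\mu_\star>\mu_0$, the tangential contribution is negligible, so that per sampling point $\mx_m$ the integrand reduces to $J_0(\omega|\mx_m-\mz|)^2-2\langle(\mx_m-\mz)/|\mx_m-\mz|,\mn(\mx_m)\rangle^2J_1(\omega|\mx_m-\mz|)^2$. Next, because the $F$ frequencies $\omega_1<\cdots<\omega_F$ are taken densely and (essentially) uniformly on $[\omega_1,\omega_F]$, I would replace the arithmetic mean $\frac{1}{F}\sum_{f=1}^{F}$ in (\ref{SubspaceMigrationMulti}) by the Riemann integral $\frac{1}{\omega_F-\omega_1}\int_{\omega_1}^{\omega_F}$. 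This is the step from which the $\approx$ in the statement originates.

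The heart of the argument is the evaluation of $\int_{\omega_1}^{\omega_F}J_0(\omega|x|)^2\,d\omega$, where I abbreviate $x=\mx_m-\mz$. I would first establish, directly from the recurrences $J_0'=-J_1$ and $J_1'=J_0-t^{-1}J_1$, the antiderivative identity $\frac{d}{dt}\bigl[t(J_0(t)^2+J_1(t)^2)\bigr]=J_0(t)^2-J_1(t)^2$; indeed the cross terms cancel, leaving $(J_0^2+J_1^2)-2J_1^2$. After the substitution $t=\omega|x|$ this provides an exact primitive of $J_0^2-J_1^2$ on $[\omega_1|x|,\omega_F|x|]$, whence
\[\int_{\omega_1}^{\omega_F}J_0(\omega|x|)^2\,d\omega=\Lambda(x;\omega_F)-\Lambda(x;\omega_1)+\int_{\omega_1}^{\omega_F}J_1(\omega|x|)^2\,d\omega,\]
where the boundary terms are precisely $\Lambda(x;\omega)=\omega(J_0(\omega|x|)^2+J_1(\omega|x|)^2)$.

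Finally I would substitute this identity into the frequency integral of the reduced integrand. Integrating the $J_0^2$ term yields the boundary difference $\Lambda(x;\omega_F)-\Lambda(x;\omega_1)$ together with an extra $\int_{\omega_1}^{\omega_F}J_1(\omega|x|)^2\,d\omega$; merging this with the $-2\langle\cdot,\mn(\mx_m)\rangle^2\int J_1^2$ contribution collects the coefficient $1-2\langle(\mx_m-\mz)/|\mx_m-\mz|,\mn(\mx_m)\rangle^2$ in front of $\int_{\omega_1}^{\omega_F}J_1(\omega|x|)^2\,d\omega$. Summing over $m=1,\dots,M$, carrying the prefactor $1/(\omega_F-\omega_1)$, and taking the modulus then reproduces (\ref{StructureMultiSM}).

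I expect the main obstacle to be isolating and verifying the primitive $\frac{d}{dt}[t(J_0^2+J_1^2)]=J_0^2-J_1^2$, since it is the one nonroutine ingredient and the exact source of the function $\Lambda$; once it is available, the remaining steps are bookkeeping. A secondary delicate point is the passage from the discrete average to the integral, which is legitimate only as an asymptotic approximation for sufficiently fine and uniform frequency sampling, and accordingly the conclusion is stated with $\approx$ rather than as an exact identity.
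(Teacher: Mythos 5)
Your proposal follows essentially the same route as the paper's own proof: start from the single-frequency structure (\ref{StructureSingleSM}), pass from the discrete average $\frac{1}{F}\sum_{f=1}^{F}$ to $\frac{1}{\omega_F-\omega_1}\int_{\omega_1}^{\omega_F}d\omega$, and apply the primitive $\int J_0(x)^2dx=x\bigl(J_0(x)^2+J_1(x)^2\bigr)+\int J_1(x)^2dx$ to produce the boundary terms $\Lambda$ and collect the coefficient of $\int_{\omega_1}^{\omega_F}J_1(\omega|\mx_m-\mz|)^2d\omega$. If anything, your version is tighter than the paper's: you verify the primitive directly from the recurrences $J_0'=-J_1$ and $J_1'(t)=J_0(t)-t^{-1}J_1(t)$, and by invoking \ref{P1} to replace $\mt(\mx_m)+\mn(\mx_m)$ by $\mn(\mx_m)$ while retaining the factor $2$, you land exactly on the stated coefficient $1-2\langle\cdot,\mn(\mx_m)\rangle^2$, whereas the paper's proof carries $\mt(\mx_m)+\mn(\mx_m)$ throughout and silently drops that factor.
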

\begin{proof}
For the sake of simplicity, we assume that $M_f=M$ for $f=1,2,\cdots,F$, i.e., the difference $\lambda_1-\lambda_F$ is small enough\footnote{In the numerical experiments, we set $\lambda_1=0.6$ and $\lambda_F=0.3$, i.e., $\lambda_1-\lambda_F=0.3$ is small enough, refer to Section \ref{Sec3}.}. Then, based on the structure (\ref{StructureSingleSM}), we can say that
\begin{align*}
  \W_{\mathrm{MF}}(\mz;F)\approx&\frac{1}{F}\left|\sum_{f=1}^{F}\sum_{m=1}^{M}\left\{J_0(\omega_f|\mx_m-\mz|)^2-\left\langle\frac{\mx_m-\mz}{|\mx_m-\mz|},\mt(\mx_m)+\mn(\mx_m)\right\rangle^2J_1(\omega_f|\mx_m-\mz|)^2\right\}\right|\\
  \approx&\frac{1}{\omega_F-\omega_1}\left|\sum_{m=1}^{M}\bigg\{\int_{\omega_1}^{\omega_F}J_0(\omega|\mx_m-\mz|)^2d\omega-\left\langle\frac{\mx_m-\mz}{|\mx_m-\mz|},\mt(\mx_m)+\mn(\mx_m)\right\rangle^2\int_{\omega_1}^{\omega_F}J_1(\omega|\mx_m-\mz|)^2d\omega\bigg\}\right|.
\end{align*}

Since, following relation holds for $x\in\mathbb{R}$
  \[\int J_0(x)^2dx=x\bigg(J_0(x)^2+J_1(x)^2\bigg)+\int J_1(x)^2dx,\]
we can evaluate
\begin{multline*}
  \int_{\omega_1}^{\omega_F}J_0(\omega|\mx_m-\mz|)^2d\omega
  =\omega_F\bigg(J_0(\omega_F|\mx_m-\mz|)^2+J_1(\omega_F|\mx_m-\mz|)^2\bigg)\\
  -\omega_1\bigg(J_0(\omega_1|\mx_m-\mz|)^2+J_1(\omega_1|\mx_m-\mz|)^2\bigg)+\int_{\omega_1}^{\omega_F}J_1(\omega|\mx_m-\mz|)^2d\omega.
\end{multline*}
With this, we can obtain (\ref{StructureMultiSM}).
\end{proof}

Now, let us compare results in Theorems \ref{TheoremSingleSM} and \ref{TheoremMultiSM}. Based on structures (\ref{StructureSingleSM}) and (\ref{StructureMultiSM}), imaging functionals are composed with contributing and disturbing terms for imaging. First, contributing terms of (\ref{StructureSingleSM}) and (\ref{StructureMultiSM}) are
\[J_0(\omega|\mx_m-\mz|)\quad\mbox{and}\quad\frac{1}{\omega_F-\omega_1}\bigg(\Lambda(\mx_m-\mz;\omega_F)-\Lambda(\mx_m-\mz;\omega_1)\bigg),\]
respectively. Since $\Lambda(\mx_m-\mz;\omega_F)-\Lambda(\mx_m-\mz;\omega_1)$ oscillates less than $J_0(\omega|\mx_m-\mz|)$ (see \cite{P-TD1}), identified shape of the supporting curve via the map $\W_{\mathrm{MF}}(\mz;F)$ will be better than the one via the map $\W_{\mathrm{SF}}(\mz;\omega)$. Next, disturbing terms of (\ref{StructureSingleSM}) and (\ref{StructureMultiSM}) are
\[2\left\langle\frac{\mx_m-\mz}{|\mx_m-\mz|},\mn(\mx_m)\right\rangle^2J_1(\omega|\mx_m-\mz|)^2\quad\mbox{and}\quad\left(1-2\left\langle\frac{\mx_m-\mz}{|\mx_m-\mz|},\mn(\mx_m)\right\rangle^2\right)\int_{\omega_1}^{\omega_F}J_1(\omega|\mx_m-\mz|)^2d\omega,\]
respectively. Similar to the comparison of contributing terms, we can observe that since the term
\[\int_{\omega_1}^{\omega_F}J_1(\omega|\mx_m-\mz|)^2d\omega\] oscillates less than $J_1(\omega|\mx_m-\mz|)^2$ and the factor
\[1-2\left\langle\frac{\mx_m-\mz}{|\mx_m-\mz|},\mn(\mx_m)\right\rangle^2\]
will reduce magnitude of disturbing term, disturbing term of (\ref{StructureMultiSM}) will affect imaging performance less than the one of (\ref{StructureSingleSM}). Thus, we can conclude the following result. We believe that following fact can be proved on the basis of the Statistical Hypothesis Testing considered in \cite{AGKPS}.

\begin{cor}\label{CorollaryImprovement}
  Maps of multi-frequency subspace migration $\W_{\mathrm{MF}}(\mz;F)$ yields better imaging results owing to less oscillation than single-frequency one $\W_{\mathrm{SF}}(\mz;\omega)$. This means that unexpected artifacts in the map of $\W_{\mathrm{MF}}(\mz;F)$ are mitigated when $F$ is sufficiently large.
\end{cor}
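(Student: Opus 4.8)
The plan is to convert the informal term-by-term comparison that precedes the statement into a quantitative estimate on the amplitude of the spurious oscillations (the ``artifacts'') relative to the height of the genuine peaks. I would argue entirely from the two structural representations (\ref{StructureSingleSM}) and (\ref{StructureMultiSM}), assumed available from Theorems \ref{TheoremSingleSM} and \ref{TheoremMultiSM}, so that the whole comparison reduces to two explicit Bessel-function expressions. Throughout I write $r=|\mx_m-\mz|$ and analyze a single summand $m$; since for large $\omega$ the peaks at the points $\mx_m$ are well separated, the full sums inherit the pointwise behaviour.

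First I would pin down what ``better'' means by separating the peak from the artifact region. At $\mz=\mx_m$ both functionals attain the value $1$: for $\W_{\mathrm{SF}}$ this is $J_0(0)^2=1$, and for $\W_{\mathrm{MF}}$ it follows from $\Lambda(\mathbf{0};\omega)=\omega$, which gives $(\omega_F-\omega_1)^{-1}\big(\Lambda(\mathbf{0};\omega_F)-\Lambda(\mathbf{0};\omega_1)\big)=1$, while the $J_1^2$ disturbing terms vanish at $r=0$. Hence in both maps the artifacts are everything away from $\mz=\mx_m$, and the comparison is genuinely about the size of the side lobes there.

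The core step is the large-argument analysis. Inserting the asymptotic form of $J_n$ used in the proof of Theorem \ref{TheoremSingleSM} into (\ref{StructureSingleSM}) gives, for $\omega r\gg1$ and up to $\mathcal{O}((\omega r)^{-2})$,
\[
J_0(\omega r)^2\approx\frac{1}{\omega\pi r}\big(1+\sin(2\omega r)\big),\qquad
J_1(\omega r)^2\approx\frac{1}{\omega\pi r}\big(1-\sin(2\omega r)\big),
\]
so the single-frequency map carries a fixed-period oscillatory term $\sin(2\omega r)$ that, as $\mz$ varies, produces the spurious curves described in \ref{P-SUB3}. For the multi-frequency functional the same substitution turns each disturbing term into a frequency average $(\omega_F-\omega_1)^{-1}\int_{\omega_1}^{\omega_F}(\omega\pi r)^{-1}\big(1-\sin(2\omega r)\big)\,d\omega$. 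The non-oscillatory part integrates to a smooth monotone background $(\omega_F-\omega_1)^{-1}(\pi r)^{-1}\ln(\omega_F/\omega_1)$, while the oscillatory part reduces to a difference of sine integrals $(\omega_F-\omega_1)^{-1}(\pi r)^{-1}\big(\mathrm{Si}(2\omega_F r)-\mathrm{Si}(2\omega_1 r)\big)$. Because $\mathrm{Si}$ is bounded and tends to $\pi/2$, this quantity loses the $\mz$-periodicity of the single-frequency side lobes and decays like $((\omega_F-\omega_1)r)^{-1}$, so the concentric spurious curves of \ref{P-SUB3} are smoothed into a featureless background. This is precisely the ``oscillates less'' effect referenced from \cite{P-TD1}. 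Combining this with the factor $1-2\langle(\mx_m-\mz)/r,\mn(\mx_m)\rangle^2$, whose modulus is at most $1$, shows that the disturbing term of (\ref{StructureMultiSM}) is dominated by that of (\ref{StructureSingleSM}), and the same averaging argument applied to the contributing terms damps their $\mz$-oscillation without lowering the peak.

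Finally I would let $F\to\infty$ with $\{\omega_f\}$ becoming dense in $[\omega_1,\omega_F]$, so the Riemann sums defining $\W_{\mathrm{MF}}$ converge to the integrals in (\ref{StructureMultiSM}) and the residual oscillatory integrals attain the decay bounds above, giving the claimed suppression of artifacts as $F$ grows. The main obstacle is controlling the oscillatory integral $\int_{\omega_1}^{\omega_F}\omega^{-1}\sin(2\omega r)\,d\omega$ uniformly in $r$: the sine-integral bound degrades as $r\to0$, which is exactly where the peaks live, so one must excise a neighbourhood of each $\mx_m$ (handled by the peak analysis above) before applying the estimate on the artifact region $r$ bounded away from $0$. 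A secondary difficulty is that the statement as worded is qualitative; to make ``better imaging'' genuinely provable I would fix a quantitative figure of merit---for instance the ratio of maximal side-lobe height to peak height, or the $L^2$ mass of the map over the artifact region---and phrase the conclusion, and the role of increasing $F$, in those terms.
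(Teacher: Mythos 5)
Your proposal is correct in substance and follows the same skeleton as the paper---a term-by-term comparison of the contributing and disturbing parts of (\ref{StructureSingleSM}) and (\ref{StructureMultiSM})---but you should know that the paper itself gives no actual proof of this corollary: it is asserted immediately after a qualitative comparison in which the key claim, that $\Lambda(\mx_m-\mz;\omega_F)-\Lambda(\mx_m-\mz;\omega_1)$ and $\int_{\omega_1}^{\omega_F}J_1(\omega|\mx_m-\mz|)^2d\omega$ ``oscillate less'' than their single-frequency counterparts, is outsourced to \cite{P-TD1}, and the author explicitly states only a \emph{belief} that a rigorous proof could be obtained via the Statistical Hypothesis Testing of \cite{AGKPS}. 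What you do differently is to substantiate exactly the step the paper leaves heuristic: the expansions $J_0(\omega r)^2\approx(\pi\omega r)^{-1}(1+\sin 2\omega r)$ and $J_1(\omega r)^2\approx(\pi\omega r)^{-1}(1-\sin 2\omega r)$ are correct consequences of the asymptotic form quoted in the proof of Theorem \ref{TheoremSingleSM}, your frequency average correctly splits into a smooth logarithmic background plus the sine-integral difference $\big(\pi r(\omega_F-\omega_1)\big)^{-1}\big(\mathrm{Si}(2\omega_F r)-\mathrm{Si}(2\omega_1 r)\big)$, and your peak checks ($\Lambda(\mathbf{0};\omega)=\omega$ giving unit height, $J_1^2$ vanishing at $r=0$) are right; note also that the leading oscillatory parts of $J_0^2+J_1^2$ cancel inside $\Lambda$, which is the cleanest explanation of why the multi-frequency contributing term is smooth. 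One small refinement: since the single-frequency side-lobe envelope already decays like $(\omega r)^{-1}$, the genuine gain is not the $r^{-1}$ rate itself but the loss of the fixed-period $\sin 2\omega r$ modulation---the $\mathrm{Si}$ difference is aperiodic and is itself $\mathcal{O}\big((\omega_1 r)^{-1}\big)$ for large $r$, supplying an extra decay factor---so phrase the comparison that way. Your closing caveats (excising neighbourhoods of the $\mx_m$ before applying the $\mathrm{Si}$ bound, and the need for a quantitative figure of merit such as side-lobe-to-peak ratio to turn the qualitative wording into a theorem) are well placed: the paper tacitly assumes the first via its $\lambda/2$-segment discretization and never addresses the second, so your proposal is strictly more rigorous than the source's own justification.
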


Same as the single-frequency, we can immediately conclude that the following result of uniqueness holds.
\begin{cor}\label{CorollaryUnique2}
   Suppose that the values of $\omega_f$ are sufficiently high. If the total number $N$ of incident and observation directions and total number $F$ of applied frequencies are sufficiently large, then the shape of supporting curve $\sigma$ of thin inclusion $\Gamma$ can be obtained uniquely via the map of $\W_{\mathrm{MF}}(\mz;F)$.
\end{cor}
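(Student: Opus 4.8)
The plan is to mirror the argument underlying Corollary~\ref{CorollaryUnique1}, but now starting from the multi-frequency representation (\ref{StructureMultiSM}) of Theorem~\ref{TheoremMultiSM}. The goal is to show that, under the stated hypotheses, the map $\W_{\mathrm{MF}}(\mz;F)$ attains its maximal value precisely at the sampling points $\mx_m\in\sigma$ and is negligible for all $\mz$ bounded away from them, so that the locations of the peaks are unambiguous and therefore pin down $\sigma$.

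First I would evaluate (\ref{StructureMultiSM}) at $\mz=\mx_m$. There $|\mx_m-\mz|=0$, so $J_0(0)=1$ and $J_1(0)=0$ give $\Lambda(\mx_m-\mz;\omega)=\omega$; consequently the $\Lambda$-difference contributes exactly $\omega_F-\omega_1$, the disturbing integral vanishes, and the $m$-th summand equals $1$. Thus, just as in the single-frequency case, every point of the supporting curve produces a peak of height $1$, up to the normalization discussed in Remark~\ref{Remark1}.

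The heart of the argument is the decay for $\mz\neq\mx_m$. Here I would invoke the large-argument asymptotics quoted in the proof of Theorem~\ref{TheoremSingleSM}, from which one obtains $J_0(t)^2+J_1(t)^2\approx 2/(\pi t)$ for $t\gg1$. This makes $\Lambda(x;\omega)$ bounded in $\omega$ for each fixed $x\neq0$ (indeed convergent to $2/(\pi|x|)$), so the contributing difference $\Lambda(\mx_m-\mz;\omega_F)-\Lambda(\mx_m-\mz;\omega_1)$ stays bounded and, after division by $\omega_F-\omega_1$, tends to $0$ as the frequencies grow. For the disturbing term I would use the same asymptotics, together with the bound $|1-2\langle(\mx_m-\mz)/|\mx_m-\mz|,\mn(\mx_m)\rangle^2|\leq1$, to estimate $\int_{\omega_1}^{\omega_F}J_1(\omega|x|)^2\,d\omega$ by a quantity of order $\log(\omega_F/\omega_1)/|x|$; dividing by $\omega_F-\omega_1$ again forces this contribution to $0$. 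Combining the two estimates yields $\W_{\mathrm{MF}}(\mz;F)\approx0$ away from the sampling points, exactly the dichotomy established for $\W_{\mathrm{SF}}$.

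Finally I would pass from the discrete peak set $\{\mx_m\}$ to the curve itself: increasing $N$ refines the angular quadrature of Lemma~\ref{TheoremBessel}, large $F$ sharpens the Riemann-sum replacement of the frequency sum by its integral, and high $\omega_f$ both drives the decay above and refines the sampling number $M$ fixed by the resolution limit $\lambda/2$, so the peak locations densely cover $\sigma$ and recover it uniquely. I expect the main obstacle to be this last passage together with the control of secondary peaks: the asymptotic $J_0(t)^2+J_1(t)^2\approx2/(\pi t)$ is valid only for $\omega|\mx_m-\mz|\gg1$, so near a peak one must instead rely on continuity and on the isolation of the zeros of the Bessel terms to exclude spurious points of magnitude close to $1$. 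Moreover, since (\ref{StructureMultiSM}) is itself an approximation inherited from the asymptotic expansion of the far field and from discretization, the uniqueness should be understood as a statement about the dominant behavior of the map rather than an exact pointwise identity.
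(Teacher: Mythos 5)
Your proposal is correct and takes essentially the same route as the paper: the paper justifies this corollary only by remarking that it follows ``same as the single-frequency'' case, i.e., from the structure (\ref{StructureMultiSM}) the map $\W_{\mathrm{MF}}(\mz;F)$ exhibits peaks of magnitude $1$ at the sampling points $\mx_m\in\sigma$ (since $\Lambda(\mathbf{0};\omega)=\omega$ and the disturbing integral vanishes there) and becomes negligible away from them by the large-argument Bessel asymptotics, which is exactly the dichotomy you establish. Your additional estimates---the bounded limit $2/(\pi|\mx_m-\mz|)$ of $\Lambda$ and the $\log(\omega_F/\omega_1)$ bound on the disturbing integral---simply make explicit the details the paper leaves implicit.
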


\begin{rem}[Filtering]\label{RemarkFiltering}
  Similar to the case of single-frequency, let us consider the method of filtering. Since
  \[\max_{\mz\in\mathbb{R}^2}\left|1-2\left\langle\frac{\mx_m-\mz}{|\mx_m-\mz|},\mn(\mx_m)\right\rangle^2\right|=1,\]
  and
  \[\frac{1}{F}\sum_{f=1}^{F}J_1(\omega_f|\mz-\mx_m|)^2\approx\frac{1}{\omega_F-\omega_1}\int_{\omega_1}^{\omega_F}J_1(\omega|\mz-\mx_m|)^2d\omega\leq0.338567\approx0.340\]
  Hence, for multi-frequency imaging, we can define a filtering function such that
  \[\mathcal{F}_\mathrm{M}^{(1)}[x]=\left\{\begin{array}{ccc}
\medskip x & \mbox{if} & 0.340\leq x\leq1 \\
0 & \mbox{if} & 0\leq x<0.340.
\end{array}
\right.\]
Then, $\mathcal{F}_\mathrm{M}^{(1)}[\W_{\mathrm{MF}}(\mz;F)]$ will be an improved version of $\W_{\mathrm{MF}}(\mz;F)$.
\end{rem}


\begin{rem}\label{Remark2}
  Similar to the Remark \ref{Remark1}, in this paper, we consider the following normalized value
  \[\hat{\W}_{\mathrm{MF}}(\mz;F):=\frac{\W_{\mathrm{MF}}(\mz;F)}{\displaystyle\max_{\mz\in\mathbb{R}^2}|\W_{\mathrm{SF}}(\mz;F)|}\]
  and consider the filtered map $\mathcal{F}[\hat{\W}_{\mathrm{MF}}(\mz;F)]$ instead of $\W_{\mathrm{MF}}(\mz;F)$.
\end{rem}

\section{Results of numerical simulations}\label{Sec3}
In this section, we exhibit some results of numerical simulations to support Theorems \ref{TheoremSingleSM} and \ref{TheoremMultiSM}. In order to describe thin inclusions $\Gamma_j$, $j=1,2$, two supporting smooth curves are selected as follows:
\begin{align*}
  \sigma_1&=\set{[s-0.2,-0.5s^2+0.5]^T:-0.5\leq s\leq0.5}\\
  \sigma_2&=\set{[s+0.2,s^3+s^2-0.6]^T:-0.5\leq s\leq0.5}.
\end{align*}
The thickness $h$ of thin inclusions $\Gamma_j$ is equally set to $0.015$. We denote $\eps_j$ and $\mu_j$ be the permittivity and permeability of $\Gamma_j$, respectively, and set parameters $\mu_j$, $\mu_0$, $\eps_j$ and $\eps_0$ are $5,1,5$ and $1$, respectively. Since $\mu_0$ and $\eps_0$ are set to unity, the applied frequencies reads as $\omega_f=2\pi/\lambda_f$ at wavelength $\lambda_f$ for $f=1,2,\cdots,F(=10)$, which will be varied in the numerical examples between $\lambda_1=0.6$ and $\lambda_{10}=0.3$. For single frequency imaging, $\omega=2\pi/0.4$ is applied. In order to show robustness,  $10$dB Gaussian random noise is added to the unperturbed data.

First, let us examine the effect of the selection $\mc_n$ of (\ref{VecT}). Figure \ref{Gamma1V} shows maps of $\W_{\mathrm{SM}}(\mz;\omega)$ for $\mc_n=[1,1,0]^T$, $\mc_n=[1,\frac{1}{\sqrt{2}},\frac{1}{\sqrt{2}}]^T$, and $\mc_n=[1,\frac{3}{2},\frac{1}{2}]^T$ when the thin inhomogeneity is $\Gamma_1$. Throughout the result, we can observe that one cannot identify the shape of $\Gamma_1$ at this moment. Note that based on \ref{P1}, the dominant  eigenvectors are $\mn(\mx_m)$, $\mc_n$ must be of the form $[1,\mc(\mx_m)^T]^T$. Hence, from now on, we apply $\mc_n=[1,0,1]^T$ for $\W_{\mathrm{SM}}(\mz;\omega)$.

\begin{figure}[h]
\begin{center}
\includegraphics[width=0.325\textwidth]{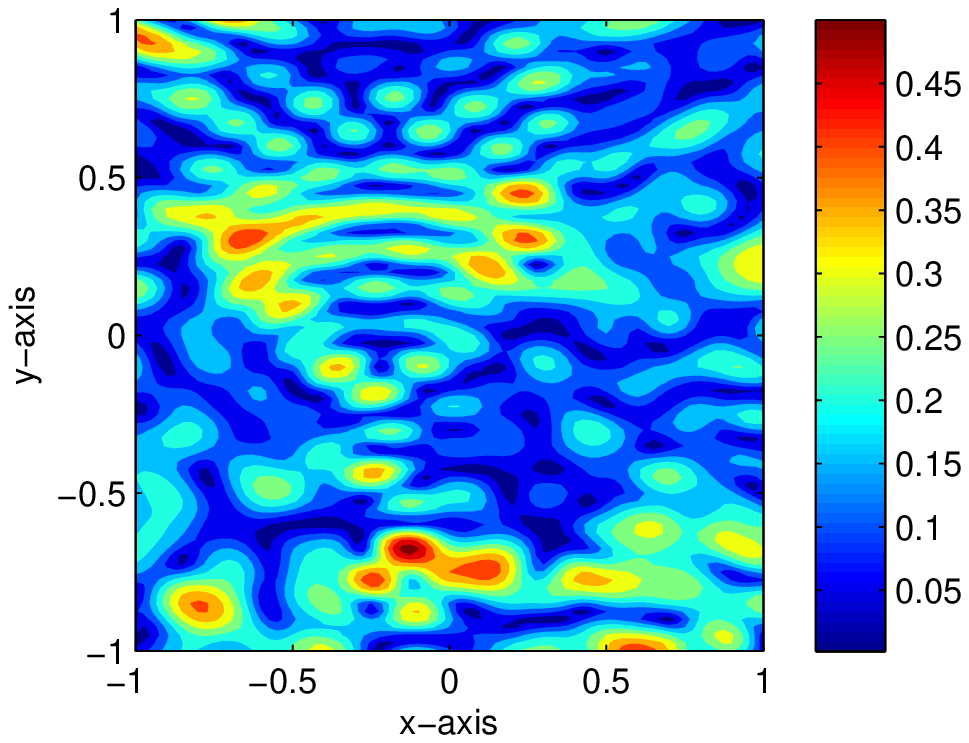}
\includegraphics[width=0.325\textwidth]{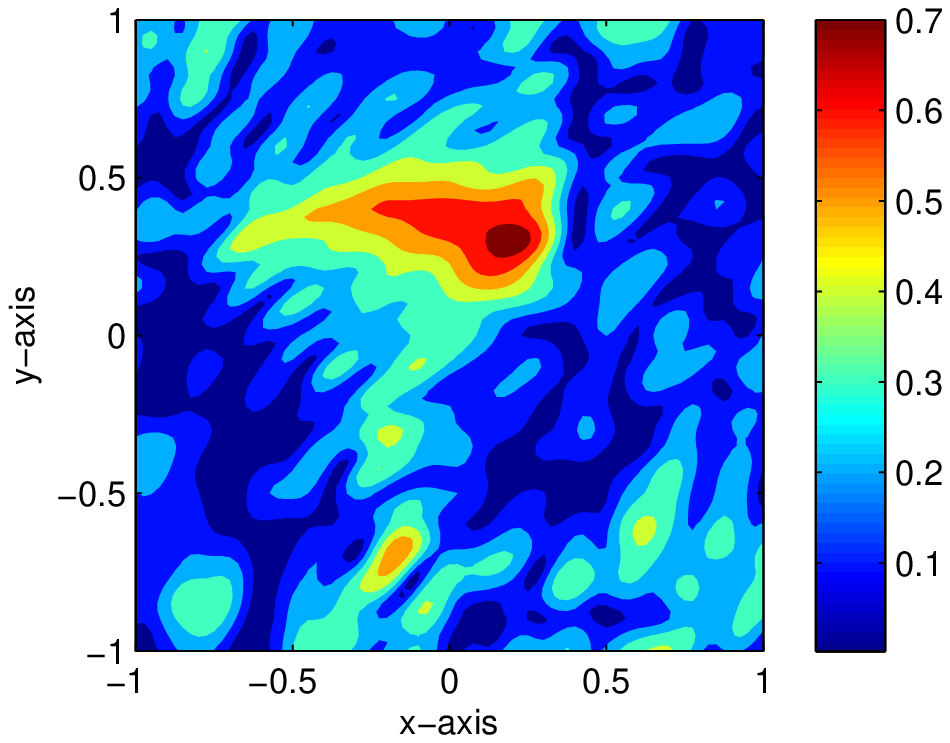}
\includegraphics[width=0.325\textwidth]{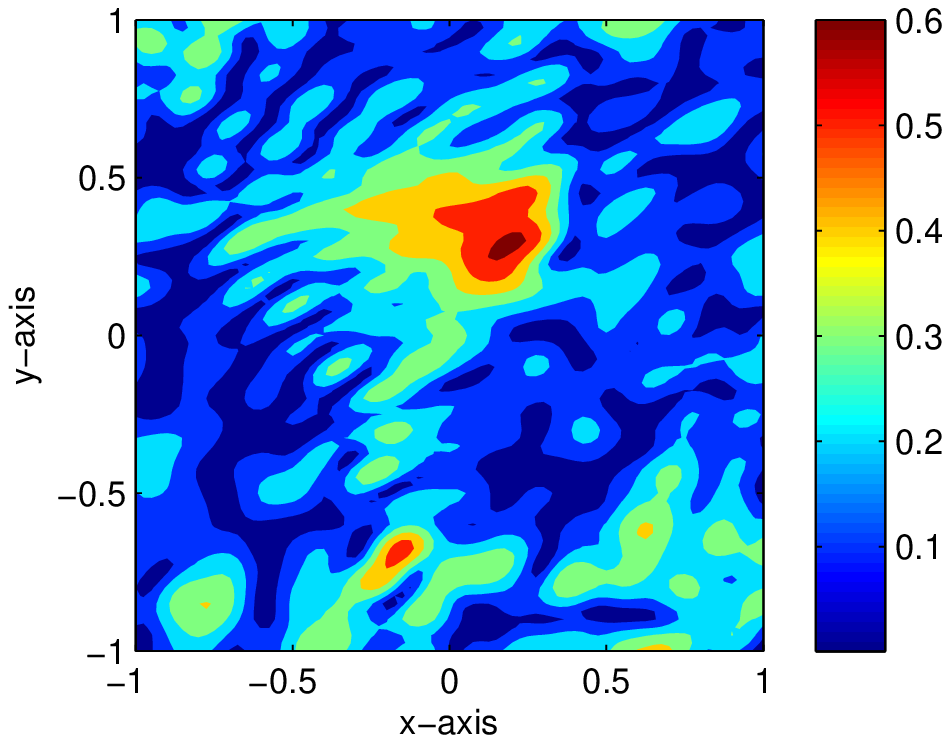}
\caption{\label{Gamma1V}Maps of $\W_{\mathrm{SM}}(\mz;\omega)$ for $\mc_n=[1,1,0]^T$ (left), $\mc_n=[1,\frac{1}{\sqrt{2}},\frac{1}{\sqrt{2}}]^T$ (center), and $\mc_n=[1,\frac{3}{2},\frac{1}{2}]^T$ (right) when the thin inhomogeneity is $\Gamma_1$.}
\end{center}
\end{figure}

Now, let us comsider the imaging results of $\W_{\mathrm{SM}}(\mz;\omega)$, $\W_{\mathrm{SF}}(\mz;\omega)$, and $\mathcal{F}[\hat{\W}_{\mathrm{SF}}(\mz;\omega)]$. On the basis of results in Figure \ref{Gamma1S}, we can observe that the shape of $\Gamma_1$ can be recognized via the maps of $\W_{\mathrm{SM}}(\mz;\omega)$ and $\W_{\mathrm{SF}}(\mz;\omega)$ but the imaging seems rather coarse for the traditional method $\W_{\mathrm{SM}}(\mz;\omega)$, better for the proposed one $\W_{\mathrm{SF}}(\mz;\omega)$. Furthermore, $\mathcal{F}[\hat{\W}_{\mathrm{SF}}(\mz;\omega)]$ exhibits very accurate shape of $\Gamma_1$ so that suggested filtering method seems very effective.

\begin{figure}[h]
\begin{center}
\includegraphics[width=0.325\textwidth]{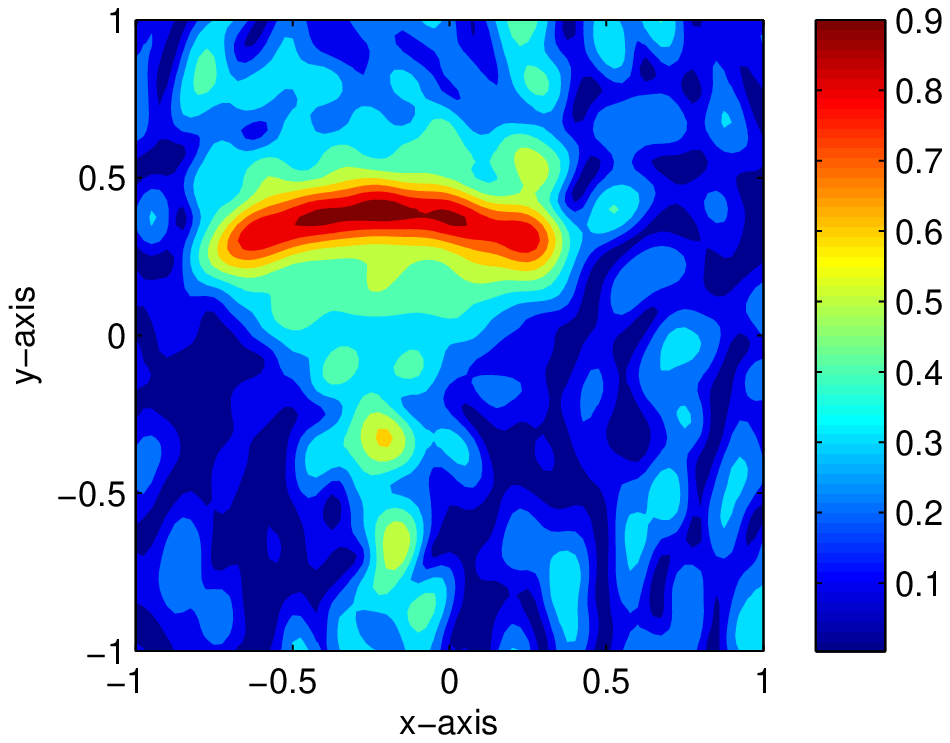}
\includegraphics[width=0.325\textwidth]{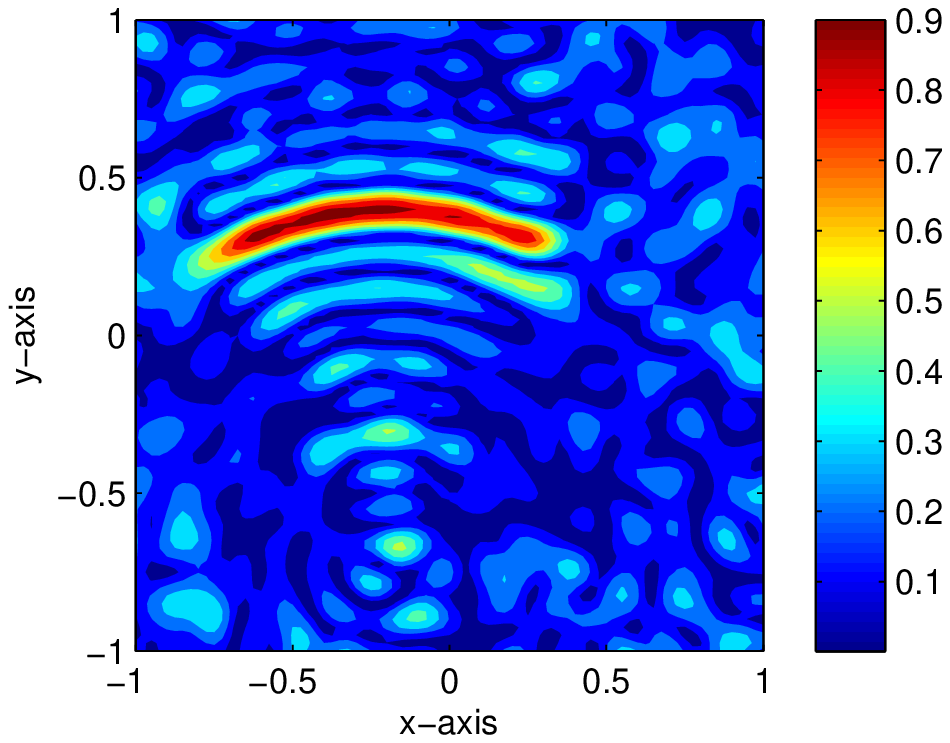}
\includegraphics[width=0.325\textwidth]{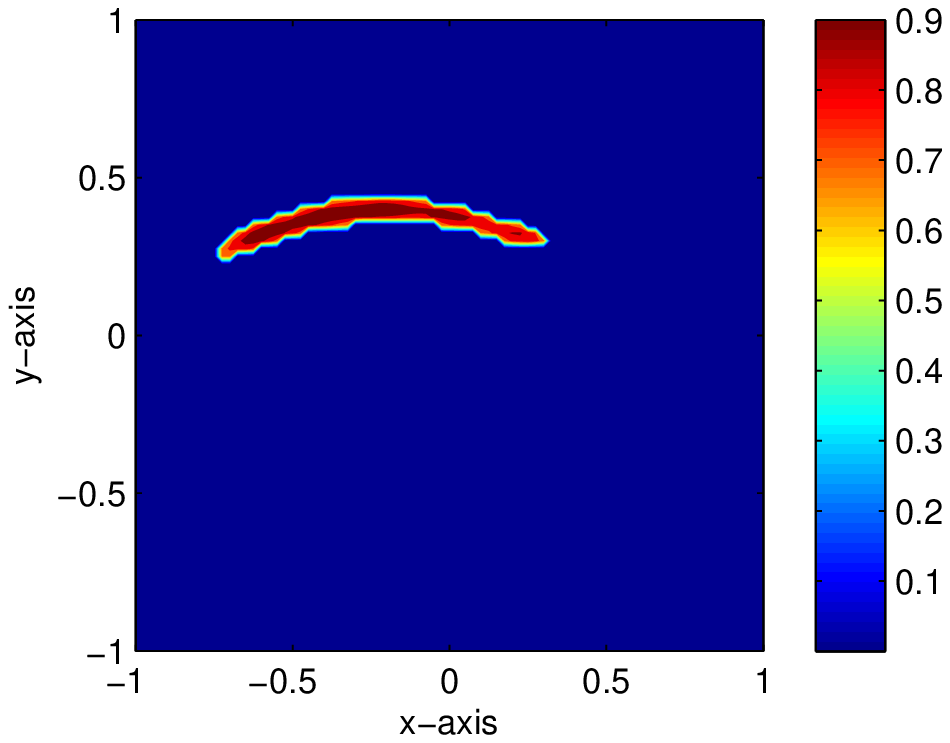}
\caption{\label{Gamma1S}Maps of $\W_{\mathrm{SM}}(\mz;\omega)$ (left), $\W_{\mathrm{SF}}(\mz;\omega)$ (center), and $\mathcal{F}[\hat{\W}_{\mathrm{SF}}(\mz;\omega)]$ (right) when the thin inhomogeneity is $\Gamma_1$.}
\end{center}
\end{figure}

Maps of $\W_{\mathrm{SM}}(\mz;\omega)$, $\W_{\mathrm{SF}}(\mz;\omega)$, and $\mathcal{F}[\hat{\W}_{\mathrm{SF}}(\mz;\omega)]$ are shown in Figure \ref{Gamma2S} when the thin inhomogeneity is $\Gamma_2$. Similar to the results in Figure \ref{Gamma1S}, the shape of $\Gamma_1$ can be recognized but obtained shape of $\Gamma_1$ via $\W_{\mathrm{SF}}(\mz;\omega)$ looks like an \textit{anchor} due to the appearance of unexpected artifacts. Although, some artifacts are still visible, the result via $\W_{\mathrm{SF}}(\mz;\omega)$ seems better than the one via $\W_{\mathrm{SM}}(\mz;\omega)$. And, similar to the previous result, filtering method seems still effective.

\begin{figure}[h]
\begin{center}
\includegraphics[width=0.325\textwidth]{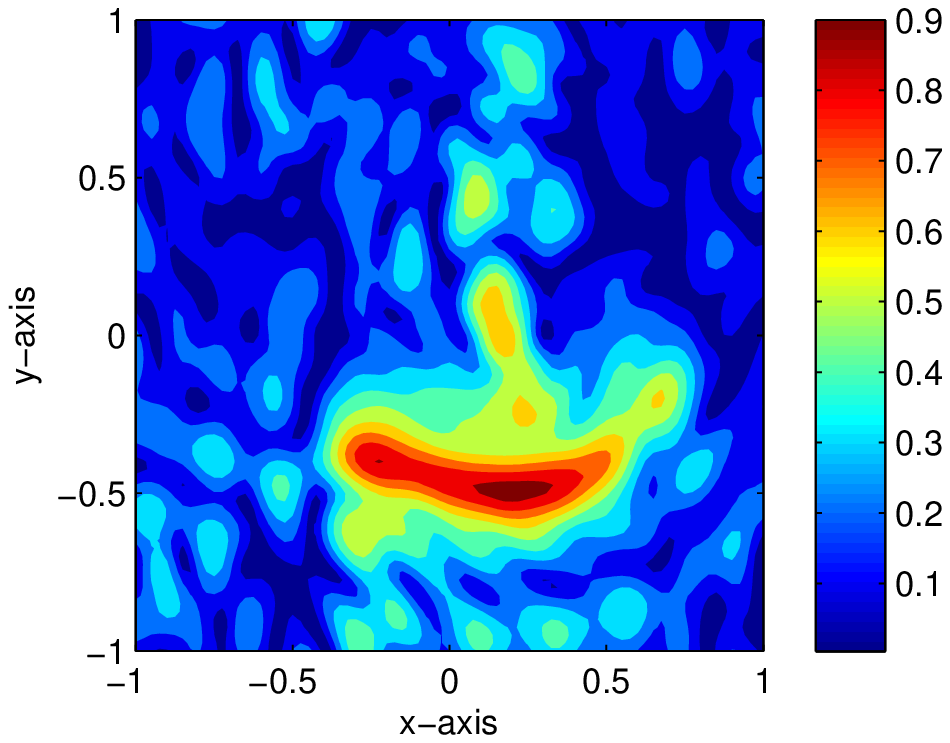}
\includegraphics[width=0.325\textwidth]{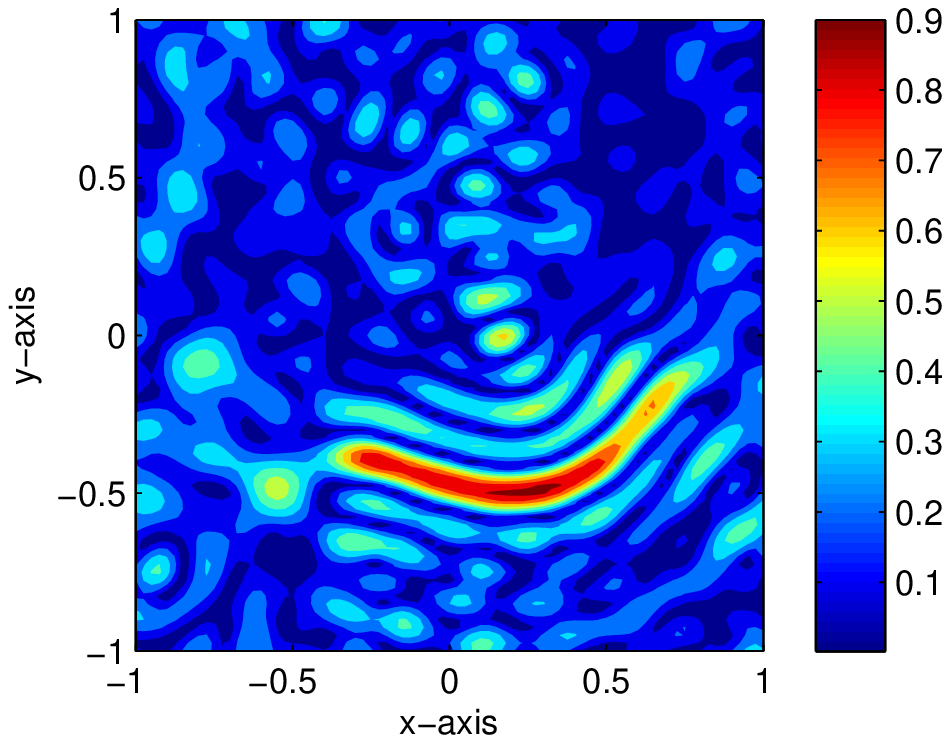}
\includegraphics[width=0.325\textwidth]{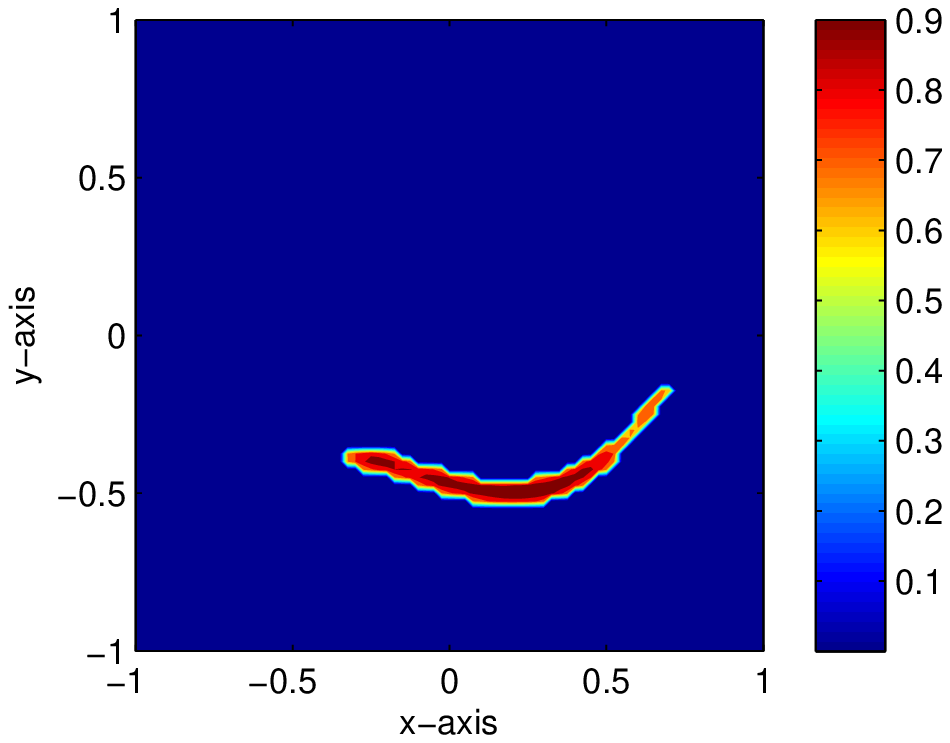}
\caption{\label{Gamma2S}Same as Figure \ref{Gamma1S} except the thin inhomogeneity is $\Gamma_2$.}
\end{center}
\end{figure}

It is well-known that one of advantage of subspace migration is its straightforward application to the imaging of multiple inhomogeneities. Figure \ref{GammaM1} shows the maps of $\W_{\mathrm{SM}}(\mz;\omega)$, $\W_{\mathrm{SF}}(\mz;\omega)$, and $\mathcal{F}[\hat{\W}_{\mathrm{SF}}(\mz;\omega)]$ for imaging multiple thin inhomogeneities $\Gamma_1\cup\Gamma_2$ with same permittivity $\eps_1=\eps_2=5$ and permeability $\mu_1=\mu_2=5$. Similar to the imaging of single inhomogeneity, we can observe that $\W_{\mathrm{SF}}(\mz;\omega)$ is an effective method. However, due to the appearance of artifacts, it is hard to identify true shape of inhomogeneities. So, in contrast to the imaging of single inhomogeneity, filtering method is not effective for imaging of multiple inhomogeneities.

\begin{figure}[h]
\begin{center}
\includegraphics[width=0.325\textwidth]{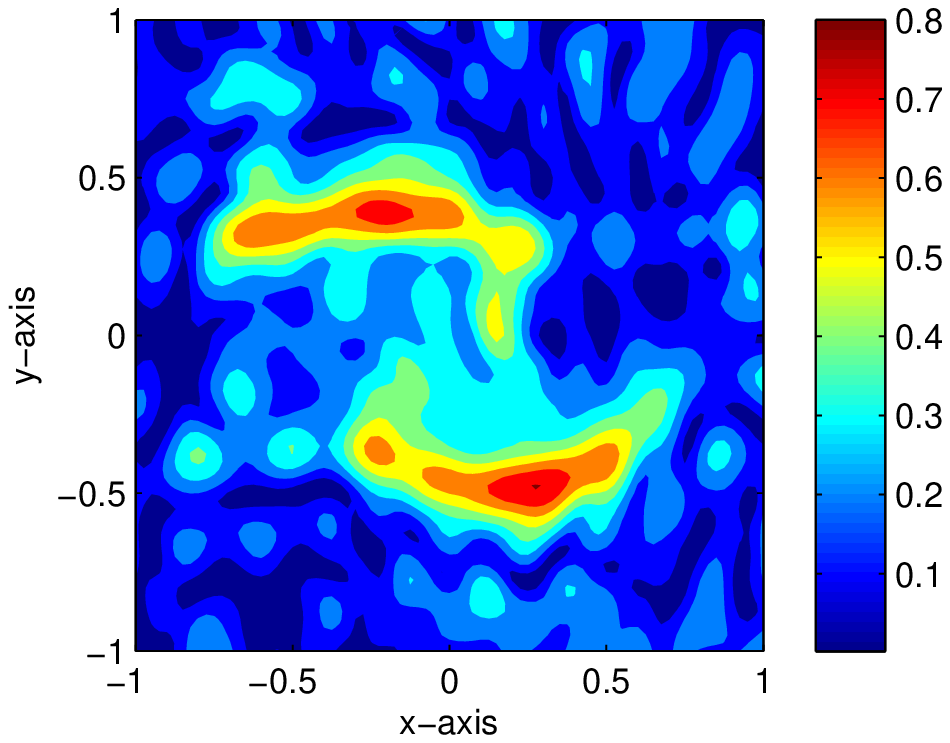}
\includegraphics[width=0.325\textwidth]{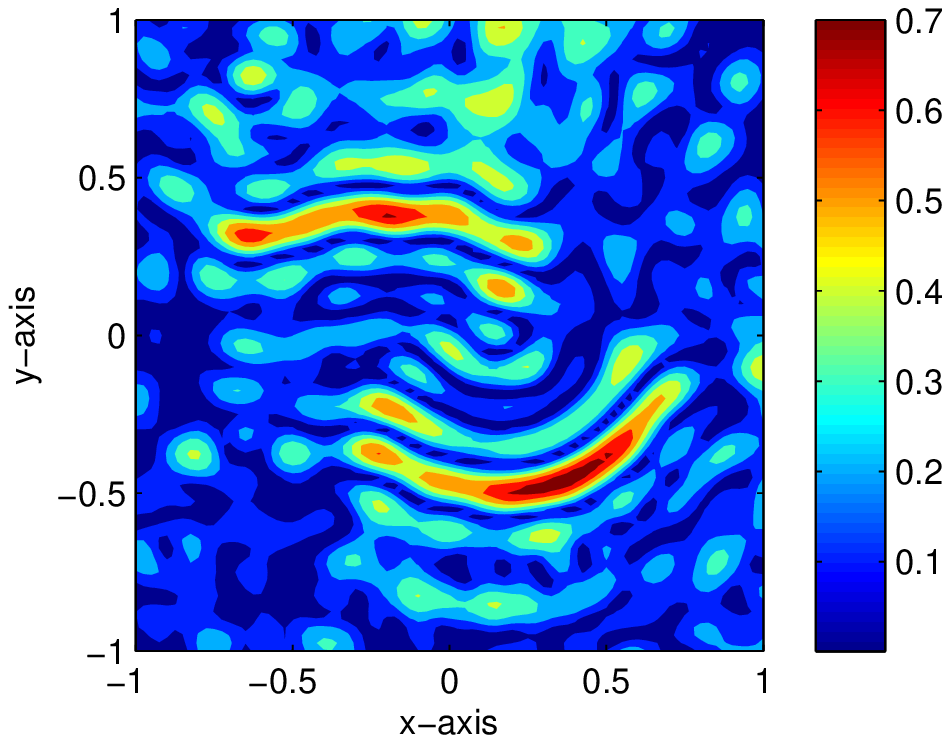}
\includegraphics[width=0.325\textwidth]{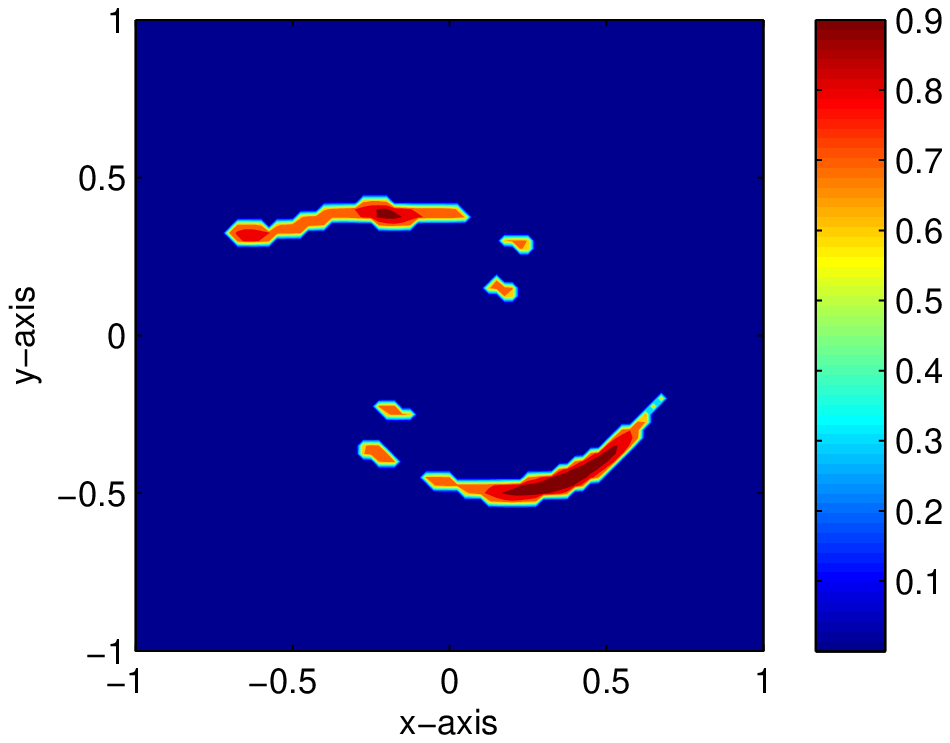}
\caption{\label{GammaM1}Same as Figure \ref{Gamma1S} except with same permittivities and permeabilities when the thin inclusion is $\Gamma_1\cup\Gamma_2$.}
\end{center}
\end{figure}

Figure \ref{GammaM2} shows the maps of $\W_{\mathrm{SM}}(\mz;\omega)$, $\W_{\mathrm{SF}}(\mz;\omega)$, and $\mathcal{F}[\hat{\W}_{\mathrm{SF}}(\mz;\omega)]$ under the same configuration as the previous result in Figure \ref{GammaM1}, except for different material
properties, $\eps_1=\mu_1=10$ and $\eps_2=\mu_2=5$. Then, based on the results in \cite{PL3,P-SUB1}, values of $\W_{\mathrm{SM}}(\mz;\omega)$ and $\W_{\mathrm{SF}}(\mz;\omega)$ for $\mz\in\Gamma_2$ will be smaller than $\W_{\mathrm{SM}}(\mz;\omega)$ and $\W_{\mathrm{SF}}(\mz;\omega)$ for $\mz\in\Gamma_1$, respectively. Furthermore, due to the unexpected artifacts, the shape of $\Gamma_2$ cannot be identified, while $\Gamma_1$ can be identified. Correspondingly, only the shape of $\Gamma_1$ can be identified in the map of $\mathcal{F}[\hat{\W}_{\mathrm{SF}}(\mz;\omega)]$.

\begin{figure}[h]
\begin{center}
\includegraphics[width=0.325\textwidth]{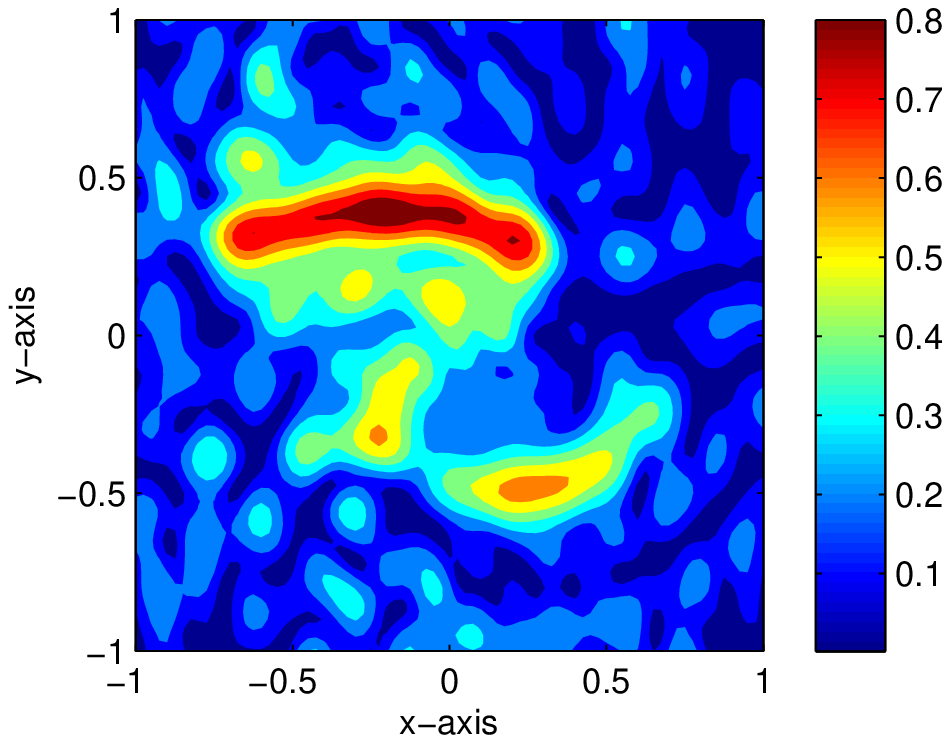}
\includegraphics[width=0.325\textwidth]{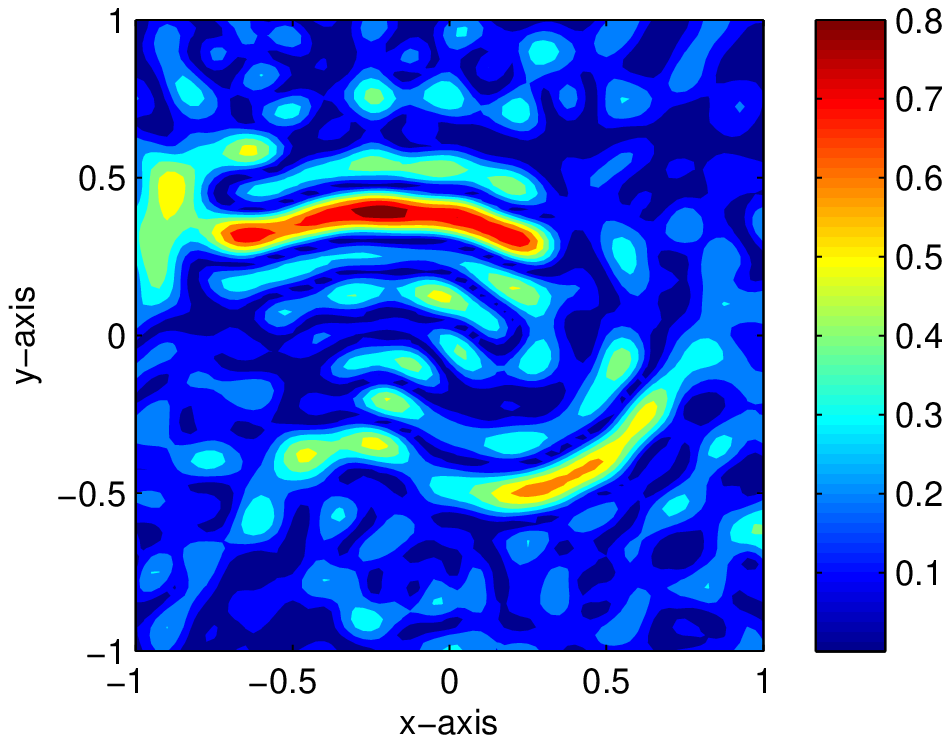}
\includegraphics[width=0.325\textwidth]{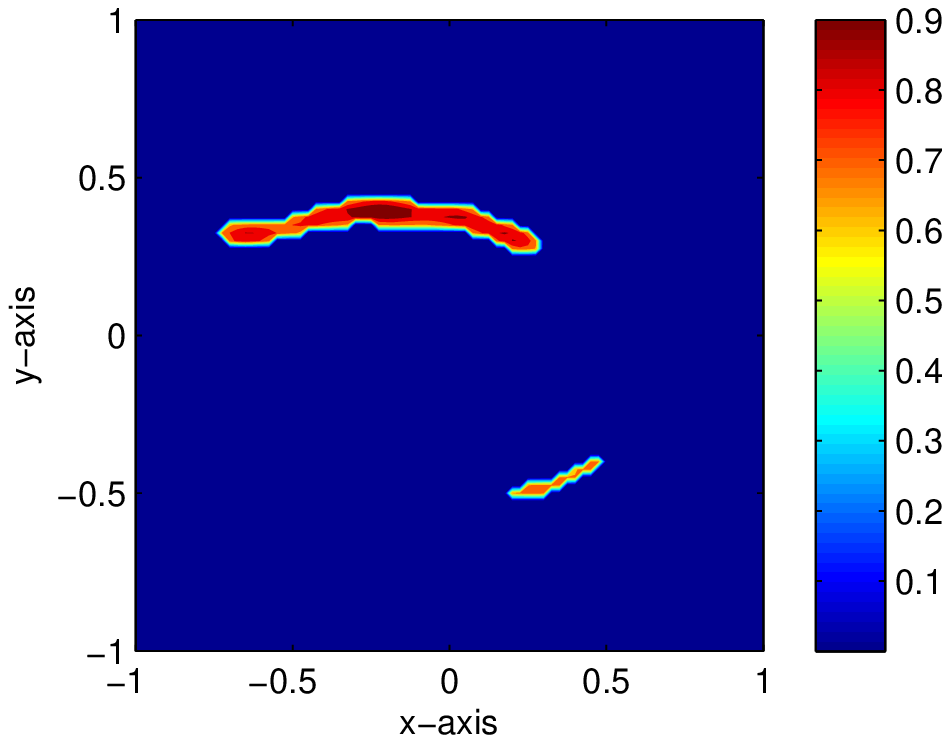}
\caption{\label{GammaM2}Same as Figure \ref{GammaM1} except with different permittivities and permeabilities.}
\end{center}
\end{figure}

From now on, we consider the multi-frequency imaging. Maps of $\W_{\mathrm{MM}}(\mz;F)$, $\W_{\mathrm{MF}}(\mz;F)$, and $\mathcal{F}[\hat{\W}_{\mathrm{MF}}(\mz;F)]$ are exhibited in Figure \ref{GammaMulti1} when the thin inhomogeneity is $\Gamma_1$. Although, map of $\W_{\mathrm{MF}}(\mz;F)$ contains more artifacts than $\W_{\mathrm{MM}}(\mz;F)$, identified shape via the map of $\W_{\mathrm{MF}}(\mz;F)$ seems close to the true shape of $\Gamma_1$. It is interesting to observe that opposite to the Remark \ref{RemarkFiltering}, unexpected peak of small (but cannot be negligible) magnitude still remaining in the neighborhood of the tip of $\Gamma_1$ but the result seems very nice. Similar phenomenon can be examined through the results in Figure \ref{GammaMulti2} when the thin inhomogeneity is $\Gamma_2$.

\begin{figure}[h]
\begin{center}
\includegraphics[width=0.325\textwidth]{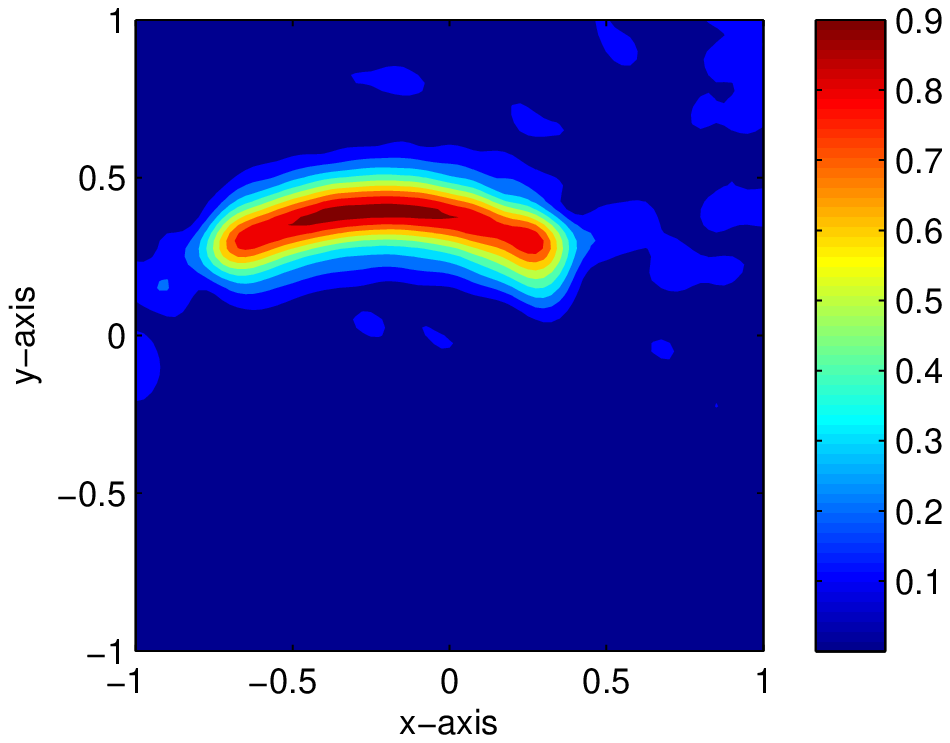}
\includegraphics[width=0.325\textwidth]{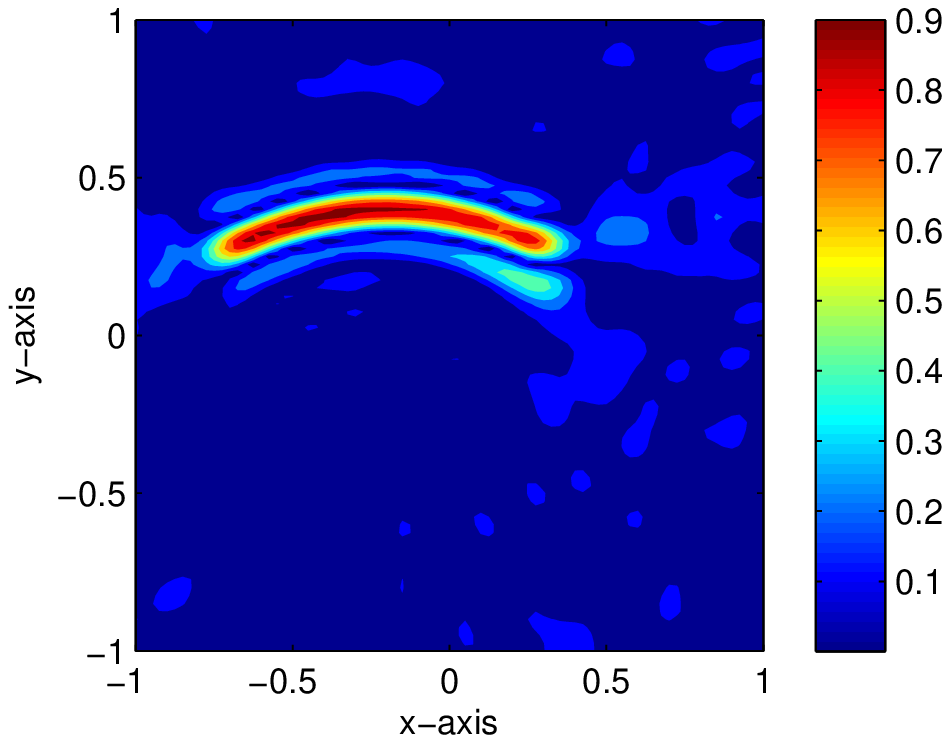}
\includegraphics[width=0.325\textwidth]{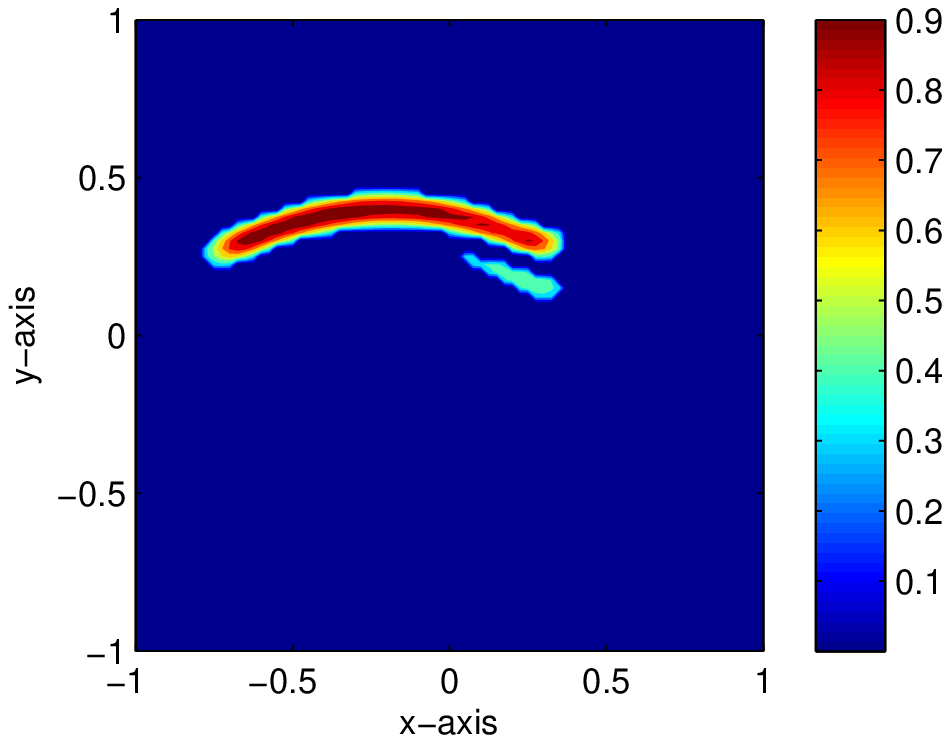}
\caption{\label{GammaMulti1}Maps of $\W_{\mathrm{MM}}(\mz;F)$ (left), $\W_{\mathrm{MF}}(\mz;F)$ (center), and $\mathcal{F}[\hat{\W}_{\mathrm{MF}}(\mz;F)]$ (right) when the thin inhomogeneity is $\Gamma_1$.}
\end{center}
\end{figure}

\begin{figure}[h]
\begin{center}
\includegraphics[width=0.325\textwidth]{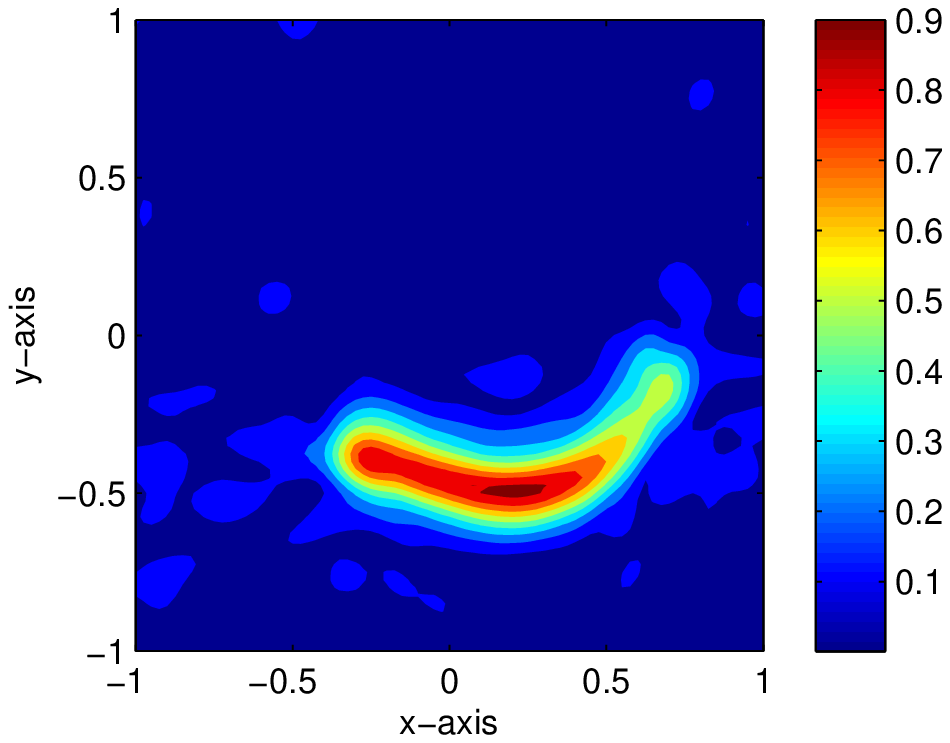}
\includegraphics[width=0.325\textwidth]{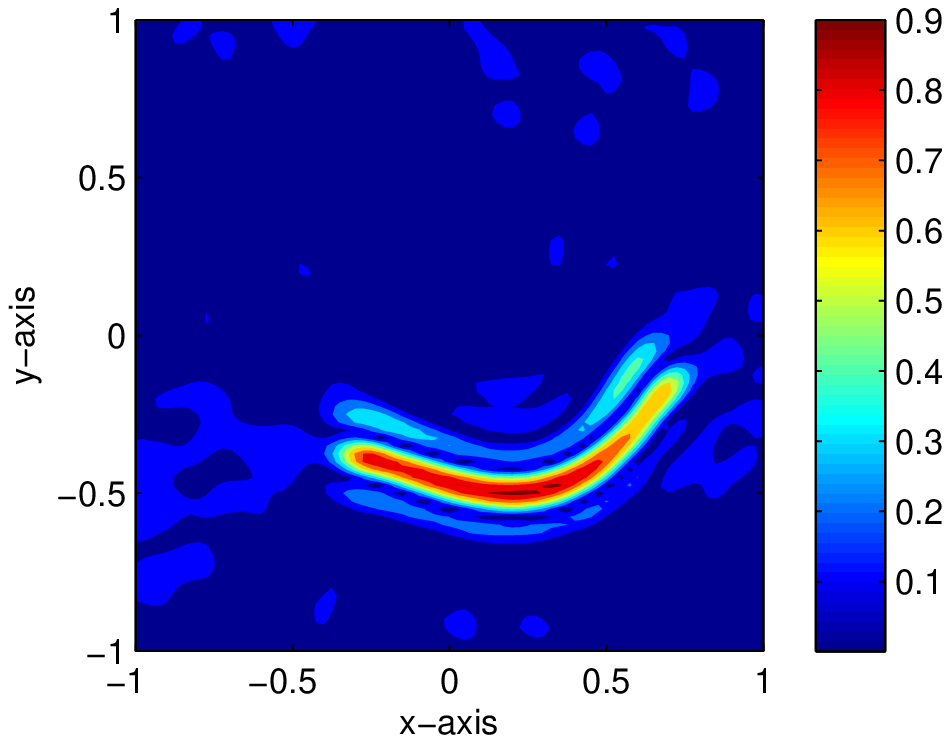}
\includegraphics[width=0.325\textwidth]{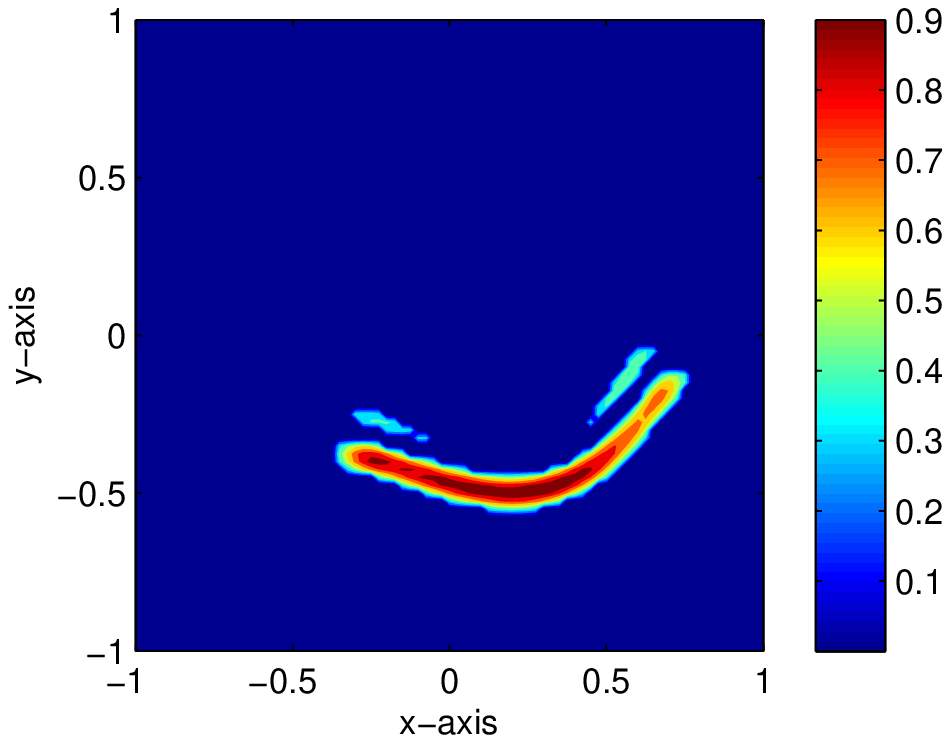}
\caption{\label{GammaMulti2}Same as Figure \ref{GammaMulti1} except the thin inhomogeneity is $\Gamma_2$.}
\end{center}
\end{figure}

Figure \ref{GammaMultiM1} shows the maps of $\W_{\mathrm{MM}}(\mz;F)$, $\W_{\mathrm{MF}}(\mz;F)$, and $\mathcal{F}[\hat{\W}_{\mathrm{MF}}(\mz;F)]$ for imaging multiple thin inhomogeneities $\Gamma_1\cup\Gamma_2$ with same permittivity $\eps_1=\eps_2=5$ and permeability $\mu_1=\mu_2=5$. Similar to the single-frequency imaging, it seems that $\W_{\mathrm{MF}}(\mz;F)$ performs better imaging accomplishment than $\W_{\mathrm{MM}}(\mz;F)$. However, due to the appearance of artifacts, it is hard to identify true shape of inhomogeneities. Furthermore, in contrast to the single-frequency imaging, filtering method seems very effective for imaging although some peaks of small magnitudes are remaining.

\begin{figure}[h]
\begin{center}
\includegraphics[width=0.325\textwidth]{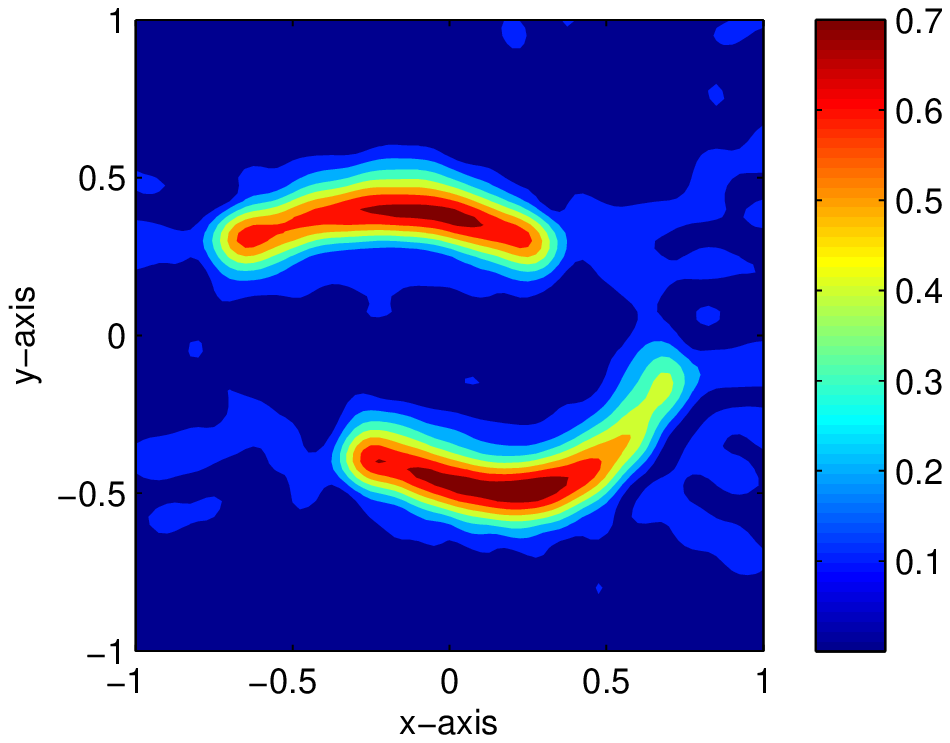}
\includegraphics[width=0.325\textwidth]{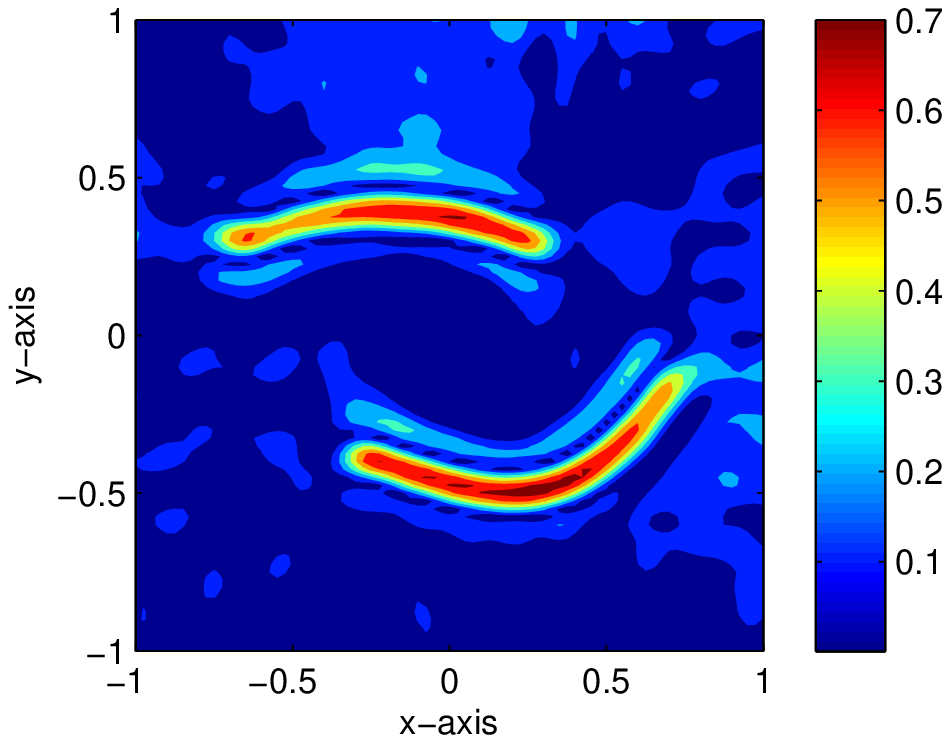}
\includegraphics[width=0.325\textwidth]{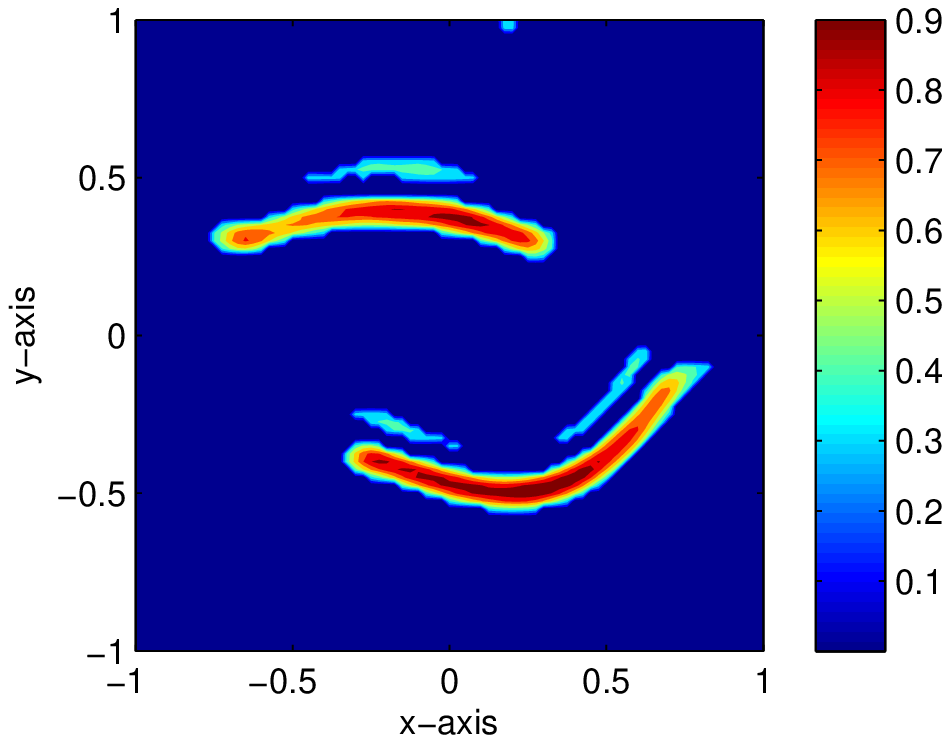}
\caption{\label{GammaMultiM1}Same as Figure \ref{GammaMulti1} except with same permittivities and permeabilities when the thin inclusion is $\Gamma_1\cup\Gamma_2$.}
\end{center}
\end{figure}

Figure \ref{GammaM2} shows the maps of $\W_{\mathrm{MM}}(\mz;F)$, $\W_{\mathrm{MF}}(\mz;F)$, and $\mathcal{F}[\hat{\W}_{\mathrm{MF}}(\mz;F)]$ under the same configuration as the previous result in Figure \ref{GammaMultiM1}, except for different material
properties, $\eps_1=\mu_1=10$ and $\eps_2=\mu_2=5$. Similar to the previous example, $\W_{\mathrm{MF}}(\mz;F)$ can be regarded as an improved version of $\W_{\mathrm{MM}}(\mz;F)$.

\begin{figure}[h]
\begin{center}
\includegraphics[width=0.325\textwidth]{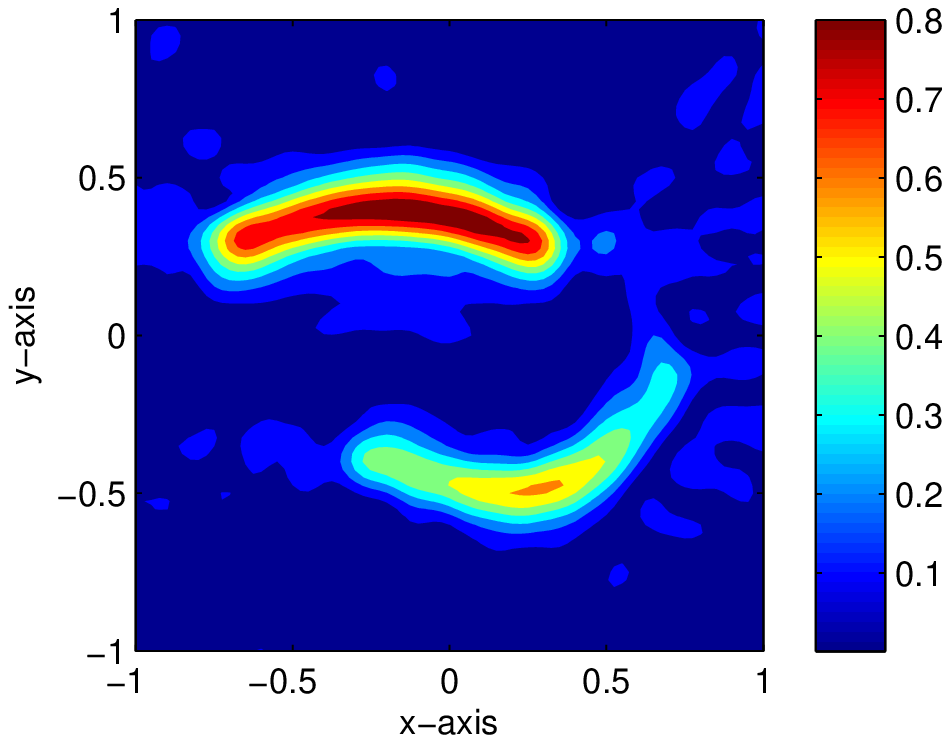}
\includegraphics[width=0.325\textwidth]{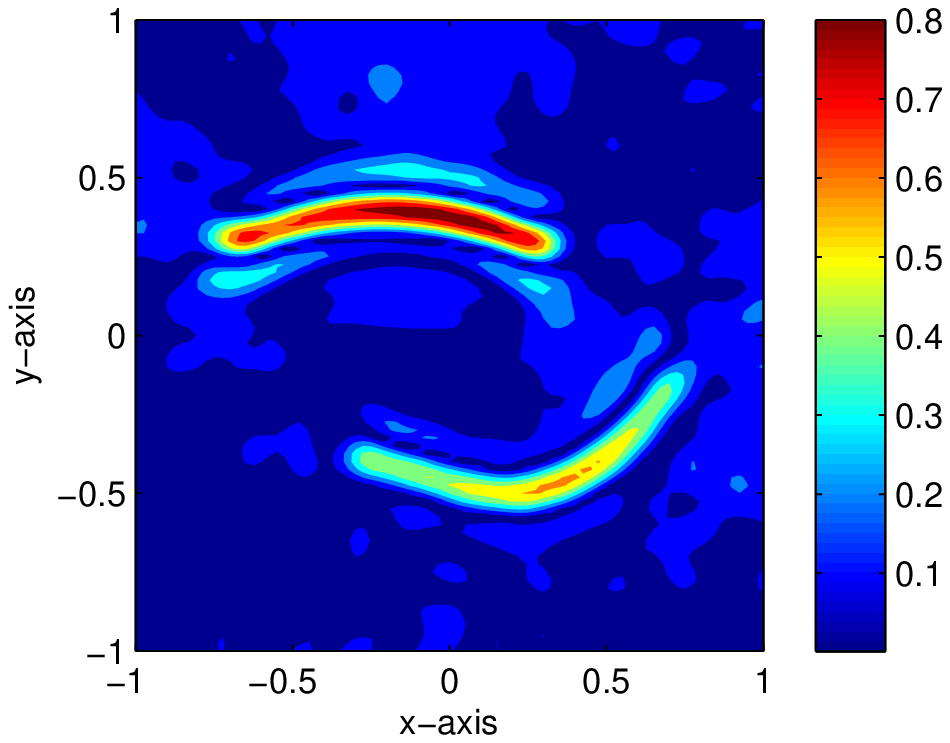}
\includegraphics[width=0.325\textwidth]{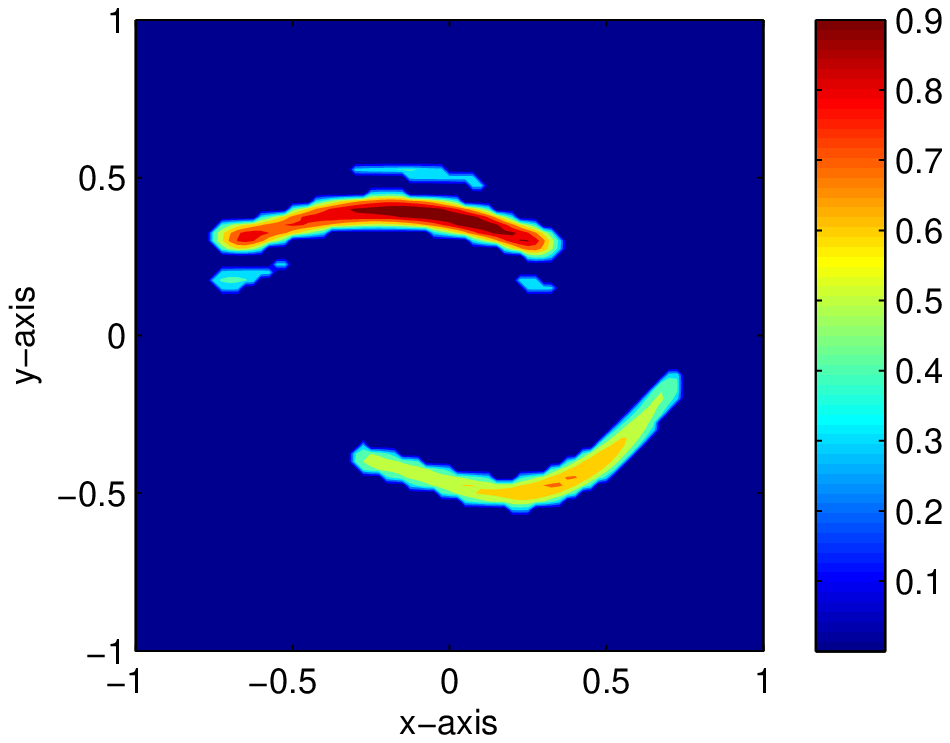}
\caption{\label{GammaMultiM2}Same as Figure \ref{GammaMultiM1} except with different permittivities and permeabilities.}
\end{center}
\end{figure}

For the final example, let us consider the application of filtering for imaging of multiple inhomogeneities when their permittivities and permeabilities are different to each other. A simply way is to divide search domain into two(or more)-disjoint areas and applying filtering function to each areas. Figure \ref{GammaMultiD} shows corresponding results. By comparing the results in Figures \ref{GammaM2} and \ref{GammaMulti2}, identified shapes are more accurate than traditional ones.

\begin{figure}[h]
\begin{center}
\includegraphics[width=0.325\textwidth]{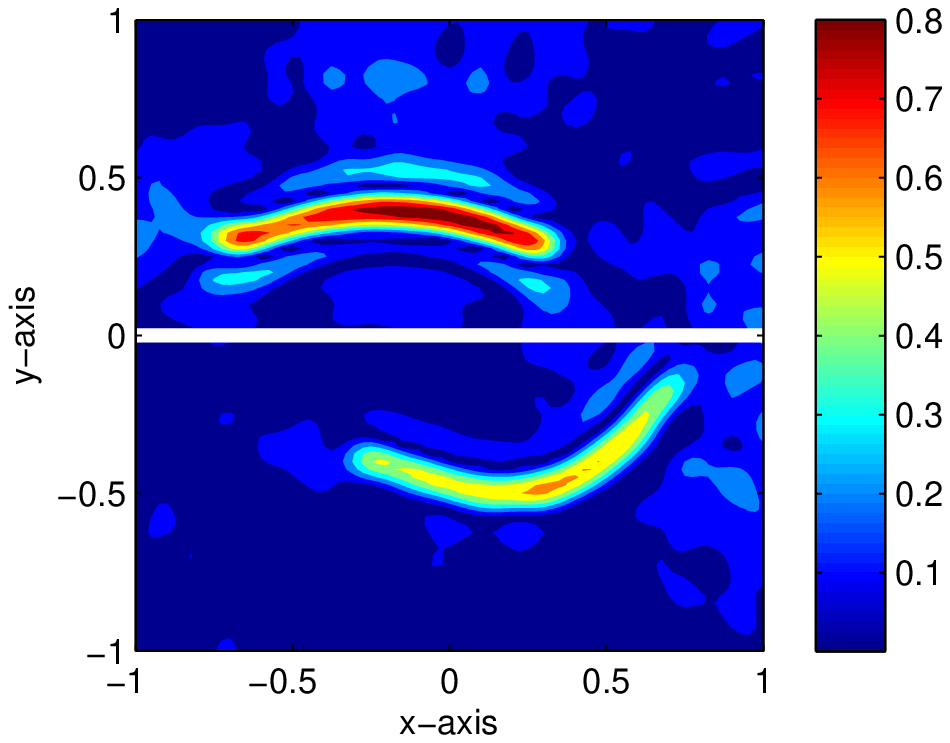}
\includegraphics[width=0.325\textwidth]{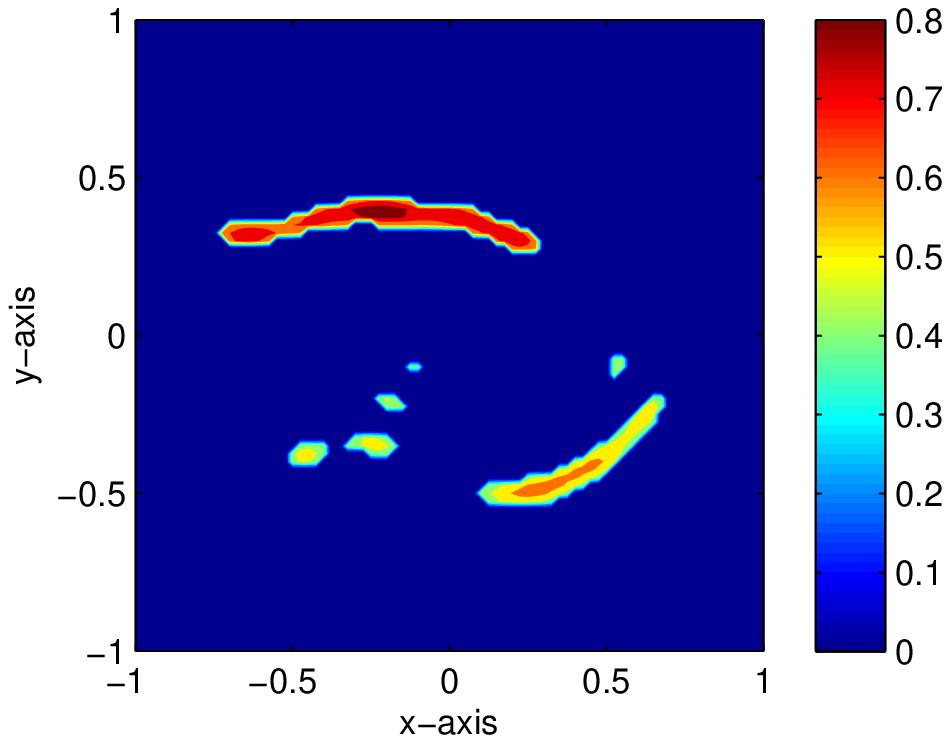}
\includegraphics[width=0.325\textwidth]{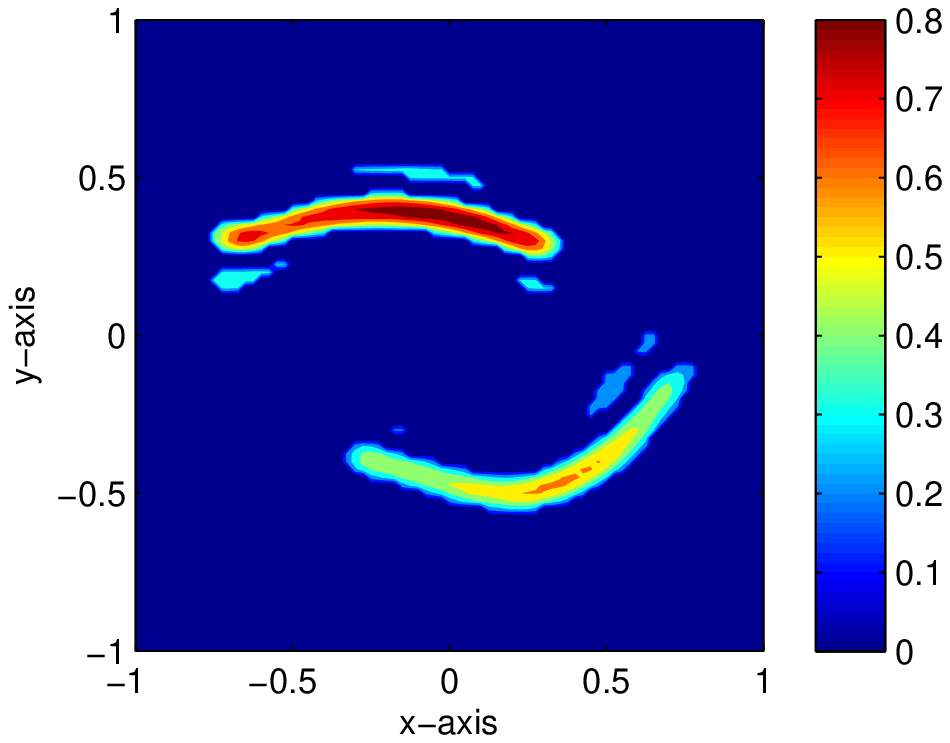}
\caption{\label{GammaMultiD}Divided search domain into two-disjoint areas (left), map of $\mathcal{F}[\hat{\W}_{\mathrm{SF}}(\mz;\omega)]$ (center), and map of $\mathcal{F}[\hat{\W}_{\mathrm{MF}}(\mz;F)]$ (right) when the thin inhomogeneities are $\Gamma_1\cup\Gamma_2$.}
\end{center}
\end{figure}

\section{Conclusion}\label{Sec4}
In this paper, we considered the subspace migration imaging functional without any \textit{a priori} information of thin inhomogeneities. We derived a relationship between the imaging functional and the Bessel functions of integer order of the first kind. The derived results indicated that although some unexpected artifacts still appear, proposed subspace migration improves traditional one. For a further improvement, two-different methodologies are also suggested and successfully applied.

Here, we focused on the imaging of thin, curve-like electromagnetic inhomogeneities. In the same line of though, the analysis of subspace migration for the imaging of perfectly conducting cracks in Transverse Magnetic (TM) and Transverse Electric (TE) cases will an interesting research topic.

Finally, we have been considering an imaging of a two-dimensional thin electromagnetic inclusions. The analysis could be extended to a three-dimensional problem; refer to \cite{AILP,IGLP,SCC} for related work.

\section*{Acknowledgement}
The author would like to acknowledge Dominique Lesselier for his precious comments. Parts of current work was done while the author was visiting Laboratoire des Signaux et Syst{\`e}ms (L2S), Ecole Sup{\'e}rieure d'Electricit{\'e} (Sup{\'e}lec). This research was supported by Basic Science Research Program through the National Research Foundation of Korea (NRF) funded by the Ministry of Education(No. NRF-2014R1A1A2055225) and the research program of Kookmin University in Korea.

\appendix
\section{Orthogonality of $\mH_m^{(s)}$, $s=1,2,3$, and their norms}\label{secA}
Note that since
\[\left\langle\mH_m^{(1)}(\omega),\mH_m^{(2)}(\omega)\right\rangle=\sum_{n=1}^{N}\langle\sqrt2\vt_n,\mt(\mx_m)\rangle=\frac{N}{\sqrt{2}\pi}\int_{\mathbb{S}^1}\langle\vt,\mt(\mx_m)\rangle d\vt=\sqrt2\int_0^{2\pi}\cos(\theta+\varphi_m)d\theta=0,\]
$\mH_m^{(1)}(\omega)$ is orthogonal to $\mH_m^{(2)}(\omega)$ and similarly, orthogonal to $\mH_m^{(3)}(\omega)$. Furthermore, if $\mt(\mx_m)=[\cos\varphi_m,\sin\varphi_m]^T$ then since $\mn(\mx_m)=[-\sin\varphi_m,\cos\varphi_m]^T$,
\begin{align*}
  \left\langle\mH_m^{(2)}(\omega),\mH_m^{(3)}(\omega)\right\rangle&=\sum_{n=1}^{N}\langle\sqrt2\vt_n,\mt(\mx_m)\rangle\langle\sqrt2\vt_n,\mn(\mx_m)\rangle=\frac{N}{\pi}\int_{\mathbb{S}^1}\langle\vt,\mt(\mx_m)\rangle\langle\vt,\mn(\mx_m)\rangle d\vt\\
  &=\frac{N}{\pi}\int_0^{2\pi}\cos(\theta-\varphi_m)\sin(\theta-\varphi_m)d\theta=\frac{N}{2\pi}\int_0^{2\pi}\sin(2\theta-2\varphi_m)d\theta=0.
\end{align*}
Hence $\mH_m^{(2)}(\omega)$ is orthogonal to $\mH_m^{(3)}(\omega)$.

Based on (\ref{Vec1}), it is easy to observe that
\[||\mH_m^{(1)}(\omega)||^2=\left\langle\mH_m^{(1)}(\omega),\mH_m^{(1)}(\omega)\right\rangle=N.\]
Since $N$ is sufficiently large,
\[||\mH_m^{(2)}(\omega)||^2=\left\langle\mH_m^{(2)}(\omega),\mH_m^{(2)}(\omega)\right\rangle=\sum_{n=1}^{N}\langle\vt_n,\mt(\mx_m)\rangle^2=\frac{2N}{\pi}\int_{\mathbb{S}^1}\langle\vt,\mt(\mx_m)\rangle^2d\vt.\]
Let us consider the polar coordinate $\vt=[\cos\theta,\sin\theta]^T$, $\mt(\mx_m)=[\cos\varphi_m,\sin\varphi_m]^T$. Then, performing an elementary calculus yields
\[\int_{\mathbb{S}^1}\langle\vt,\mt(\mx_m)\rangle^2d\vt=\int_0^{2\pi}\cos^2(\theta-\varphi_m)d\theta=\left[\frac14\sin(2\theta-2\varphi_m)+\frac{1}{2}(\theta-\varphi_m)\right]_0^{2\pi}=\pi.\]
Hence,
\[||\mH_m^{(2)}(\omega)||^2=\left\langle\mH_m^{(2)}(\omega),\mH_m^{(2)}(\omega)\right\rangle=\frac{N}{2}\]
and similarly,
\[||\mH_m^{(3)}(\omega)||^2=\left\langle\mH_m^{(3)}(\omega),\mH_m^{(3)}(\omega)\right\rangle=\frac{N}{2}.\]

\bibliographystyle{elsarticle-num-names}
\bibliography{../../../References}

\end{document}